\def\usehyperref{1}
\newcommand\texorpdfstring[2]{#1}
\newcommand\nolinkurl[1]{\url{#1}}
\newcommand{\Acal}{{\mathcal A}}
\newcommand{\Ccal}{{\mathcal C}}
\newcommand{\Dcal}{{\mathcal D}}
\newcommand{\Fcal}{{\mathcal F}}
\newcommand{\Mcal}{{\mathcal M}}
\newcommand{\Tcal}{{\mathcal T}}
\newcommand{\acal}{\Acal}
\newcommand{\ccal}{\Ccal}
\g@addto@macro\bfseries{\boldmath}
\newcommand{\kr}{\kern -2pt}
\DeclareMathOperator{\Hom}{Hom}
\DeclareMathOperator{\End}{End}
\DeclareMathOperator{\Maps}{Maps}
\newcommand{\enh}{{\text{\normalfont enh}}}
\newcommand{\pre}{{\normalfont \text{pre}}}
\newcommand{\co}{{\normalfont\text{co}}}
\newcommand{\enr}{{\normalfont\text{enr}}}
\newcommand{\st}{{\text{\normalfont st}}}
\newcommand{\closed}{{\normalfont \text{closed}}}
\DeclareMathOperator{\Cat}{Cat}
\DeclareMathOperator{\Tot}{Tot}
\let\Pr\relax
\DeclareMathOperator{\Pr}{Pr}
\DeclareMathOperator{\Mor}{Mor}
\DeclareMathOperator{\Mod}{Mod}
\DeclareMathOperator{\Algbrd}{Algbrd}
\newcommand{\B}{{\mathrm{B}}}
\DeclareMathOperator{\Spec}{Spec}
\DeclareMathOperator{\QCoh}{QCoh}
\DeclareMathOperator{\IndCoh}{IndCoh}
\DeclareMathOperator{\Corr}{Corr}
\DeclareMathOperator{\Sh}{Sh}
\DeclareMathOperator{\Cob}{Cob}
\newtheorem{proposition}[subsection]{Proposition}
\newtheorem{theorem}[subsection]{Theorem}
\newtheorem{corollary}[subsection]{Corollary}
\newtheoremstyle{note}{8.0pt plus 2.0pt minus 4.0pt}{8.0pt plus 2.0pt minus 4.0pt}{}{}{\bfseries}{.}{.5em}{} 
\theoremstyle{note}
\newtheorem{example}[subsection]{Example}
\newtheorem{remark}[subsection]{Remark}
\newtheorem{notation}[subsection]{Notation}
\newtheorem{construction}[subsection]{Construction}
\newtheorem{definition}[subsection]{Definition}
\title{Categorification of sheaf theory}
\author{G. Stefanich}
\date{}
\begin{document}

 
\begin{abstract}
We discuss a systematic procedure for categorifying presentable six-functor formalisms. Our main result produces, given the input of a representation of the $\infty$-category of correspondences of an $\infty$-category with finite limits $\ccal$, a compatible sequence of representations of the $(\infty,n)$-category of correspondences of $\ccal$ for every $n \geq 1$. As an application, we explain a general recipe for constructing topological field theories.
\end{abstract}

\maketitle
  
\tableofcontents

 
\section{Introduction}

Let $X$ be an algebraic stack over a base ring $k$, and let $\QCoh(X)$ be the stable $\infty$-category of quasi-coherent sheaves on $X$. In good cases, for each closed manifold $M$ we have an identification
\begin{align*}
\int_M \QCoh(X) = \QCoh(\operatorname{Maps}(M, X)) 
\end{align*}
where the left hand side denotes the factorization homology of $\QCoh(X)$ regarded as an $E_\infty$-algebra in presentable stable $k$-linear $\infty$-categories, and $\operatorname{Maps}(M, X)$ denotes the derived moduli stack parametrizing locally constant maps from $M$ into $X$.

The fundamental ingredient needed in order to make the above factorization homology computation is the categorical K\"{u}nneth formula: in good cases, for every pair of maps of algebraic stacks $Y \rightarrow S$ and $Z \rightarrow S$ one has an identification
\begin{align*}
\QCoh(Y) \otimes_{\QCoh(S)} \QCoh(Z) =  \QCoh(Y \times_S Z).
\end{align*}
This is the case for instance when all the stacks involved are perfect in the sense of \cite{BZNF}; this includes schemes, and many algebraic stacks in characteristic zero.

When working with more general sheaf theories, the categorical K\"{u}nneth formula often fails. This is the case for instance in the theories of ind-coherent sheaves and \'{e}tale sheaves. Even for quasi-coherent sheaves, when working in positive characteristic   the categorical K\"{u}nneth formula is known only under fairly restrictive conditions, and we expect that it in fact often fails, given the known pathologies for $\QCoh$ in that setting \cite{StefanichDualizability}. In light of this, it is not surprising that the more general formula
\begin{align*}
\int_M \Sh(X) = \Sh(\operatorname{Maps}(M, X)) 
\end{align*}
often fails. Examples are not hard to find: if $\Sh = \IndCoh$ then the formula is false already when $M$ is a circle and $X$ is the affine line.

The goal of this note is to present a framework that, among other things, allows one to obtain a replacement of the factorization homology formula for a general sheaf theory $\Sh$. The general statement will not (and cannot) be about factorization homology; instead, we will obtain $\Sh(\operatorname{Maps}(M, X))$ as the value on $M$ of a topological field theory. 


\subsection*{Sheaf-theoretic topological field theories}\label{subsection field theoretic}

The connection between factorization homology and topological field theory is given by \cite{Scheimbauer}: factorization homology provides the values of topological field theories valued in Morita higher categories. Thus one can recast the original $\QCoh$-factorization homology formula as follows: for good enough $X$, there is an equivalence
\[
\chi_{n\kr\Mor, \QCoh, X}(M) = \QCoh(\Maps(M, X))
\]
where 
\[
\chi_{n\kr\Mor, \QCoh, X}: n\kr\operatorname{Cob} \rightarrow n\kr\operatorname{Mor}(\Pr_{\st, k})
\]
is a fully extended unoriented topological field theory valued in the Morita theory of $E_n$-monoidal presentable stable $k$-linear $\infty$-categories, with $n$ being the dimension of $M$. We remark that although the source of $\chi_{n\kr\Mor, \QCoh, X}$ involves cobordisms of dimension at most $n$, we may secretly think of this as being an $(n+2)$-dimensional topological field theory, given the categorical complexity of its outputs: indeed, its values on manifolds of dimension $n$ are not numbers but $\infty$-categories.

In the same way that the factorization homology formula does not work for general sheaf theories, we should not expect to obtain $\Sh(\Maps(M, X))$ as the value of a topological field theory valued in $E_n$-monoidal presentable $\infty$-categories. Instead, our topological field theories will be valued in presentable higher categories \cite{Pres}. 

Up to size issues, one may think about a presentable $(\infty,n)$-category inductively as being an $(\infty,n)$-category which has colimits and whose Homs are presentable $(\infty, n-1)$-categories.\footnote{Note however that this is not \emph{literally} true: it turns out that Homs are not always presentable when $n > 2$, see \cite{Aoki}. Officially, one first considers the $(\infty,n+1)$-category of  $(\infty,n)$-categories admitting small colimits for cells of dimension $d$ for all $0 \leq d < n$, and then singles out a full subcategory of presentable objects inside there. We refer to \cite{Pres} for details.} The totality of all presentable $(\infty,n)$-categories forms a symmetric monoidal $(\infty, n+1)$-category $n\kr\Pr$. Imposing stability and $k$-linearity at the level of $(n-1)$-cells yields a variant $n\kr\Pr_{\st, k}$ which receives a symmetric monoidal functor
\[
n\kr\Mod: n\kr\Mor(\Pr_{\st, k}) \rightarrow (n+1)\kr\Pr_{\st,k}.
\]
Already for $\QCoh$, the passage from  $n\kr\Mor(\Pr_{\st, k})$ to $(n+1)\kr\Pr_{\st,k}$ has the benefit of allowing one to formulate a result which places no restrictions on the stacks:

\begin{theorem}\label{theorem qcoh}
For every algebraic stack $X$ over $k$ and every $n \geq 0$, there is a symmetric monoidal functor $\chi_{(n+1)\kr\QCoh, X}: n\kr\Cob \rightarrow (n+1)\kr\Pr_{\st, k}$ 
such that for every closed manifold $M$ of dimension $n$ we have an equivalence
\[
\chi_{(n+1)\kr\QCoh, X}(M) = \QCoh(\Maps(M, X)).
\]
\end{theorem}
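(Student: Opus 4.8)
The plan is to construct $\chi_{(n+1)\kr\QCoh, X}$ as a composite of two symmetric monoidal functors: a classical sigma model valued in correspondences of stacks, followed by the categorified sheaf theory furnished by the main theorem. First I would record the input. The $\QCoh$ six-functor formalism over $k$ --- upper-$*$ pullback, pushforward, the tensor product and base change --- assembles into a representation of the $\infty$-category of correspondences $\operatorname{Corr}(\operatorname{Stk}_k)$, where $\operatorname{Stk}_k$ denotes algebraic $k$-stacks and admits all finite limits. Feeding this into the main theorem produces, for every $n \geq 1$, a compatible representation of $\operatorname{Corr}_n(\operatorname{Stk}_k)$, which I package as a symmetric monoidal functor
\[
\QCoh_{n+1} \colon \operatorname{Corr}_n(\operatorname{Stk}_k) \longrightarrow (n+1)\kr\Pr_{\st,k}.
\]
The crucial feature I would extract from the construction is compatibility with the original theory: the $n$-fold based loops of $\QCoh_{n+1}$, evaluated on a stack $Y$, recover the ordinary presentable category $\QCoh(Y) \in \Pr_{\st,k} = \Omega^n (n+1)\kr\Pr_{\st,k}$.

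Next I would produce the sigma model. In $\operatorname{Corr}_n(\operatorname{Stk}_k)$, equipped with the symmetric monoidal structure induced by the Cartesian product, every object is fully dualizable and carries a canonical $O(n)$-homotopy fixed point structure, duality being implemented by diagonals and projections; this is the standard dualizability of iterated spans. In particular the object $X$ determines, by the unoriented cobordism hypothesis, a symmetric monoidal functor
\[
Z_X \colon n\kr\Cob \longrightarrow \operatorname{Corr}_n(\operatorname{Stk}_k), \qquad Z_X(\mathrm{pt}) = X.
\]
The value of $Z_X$ on a closed $n$-manifold $M$ is computed by the classical state-sum formula for span-valued field theories: it is the cotensor of $X$ by the homotopy type of $M$, namely the locally constant mapping stack $\Maps(M, X)$.

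I would then set $\chi_{(n+1)\kr\QCoh, X} := \QCoh_{n+1} \circ Z_X$, which is symmetric monoidal as a composite of symmetric monoidal functors, and compute
\[
\chi_{(n+1)\kr\QCoh, X}(M) = \QCoh_{n+1}(\Maps(M, X)) = \QCoh(\Maps(M, X)),
\]
the last identification being the compatibility recorded above, together with the observation that a closed $n$-manifold is an $n$-endomorphism of the monoidal unit, so that its image lands in $\Omega^n (n+1)\kr\Pr_{\st,k} = \Pr_{\st,k}$. This is precisely why the target must be an $(\infty,n+1)$-category: only then do the values on top-dimensional manifolds come out as ordinary presentable categories rather than higher ones.

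The main obstacle is the final identification $\QCoh_{n+1}(\Maps(M,X)) = \QCoh(\Maps(M,X))$. This is not formal: it requires tracing through the inductive construction of the main theorem to verify that the tower $(\QCoh_{n+1})_{n}$ genuinely categorifies $\QCoh$, i.e.\ that looping the $(n+1)$-categorical sheaf theory $n$ times and evaluating on an object returns the underlying presentable category of sheaves rather than some enriched or purely module-theoretic variant. Establishing the full dualizability of objects in $\operatorname{Corr}_n(\operatorname{Stk}_k)$ with their $O(n)$-fixed point structure, and the span-model computation $Z_X(M) = \Maps(M,X)$, are by contrast known inputs that I would invoke rather than reprove. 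I would emphasize that, unlike the Morita-theoretic functor $\chi_{n\kr\Mor, \QCoh, X}$, this construction places no perfectness hypothesis on $X$: the requisite dualizability is supplied entirely by the correspondence category, so no categorical K\"unneth formula is needed.
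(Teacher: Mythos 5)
Your overall architecture --- compose a span-valued field theory $n\kr\Cob \rightarrow n\kr\Corr(\operatorname{Stk}_k)$, supplied by the dualizability of objects in iterated correspondence categories, with a categorified sheaf theory defined on $n\kr\Corr(\operatorname{Stk}_k)$ --- is exactly the paper's: the first factor is \cite{CalaqueHaugsengScheimbauer} theorem D, and the identification $Z_X(M) = \Maps(M,X)$ is the standard cotensor computation, both correctly treated as known inputs. The gap is in where you source the second factor. You feed $\QCoh$ on all algebraic stacks into theorem \ref{theo exists categorification} and then ``package'' the output as a functor into $(n+1)\kr\Pr_{\st,k}$. But the target produced by that theorem is $n\kr\Pr_{\QCoh} = (n+1)\kr\QCoh(1_\Ccal)$ computed relative to the category of all stacks, and by proposition \ref{proposition affineness} identifying this with the standard presentable higher category requires the categorical K\"unneth formula $\QCoh(X) \otimes \QCoh(Y) = \QCoh(X \times Y)$ for all pairs of stacks. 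The entire point of stating theorem \ref{theorem qcoh} with no hypotheses on $X$ is to cover the regime (positive characteristic, imperfect stacks) where that formula fails and $n\kr\Pr_{\QCoh} \neq n\kr\Pr_{\st,k}$. What your construction actually yields is theorem \ref{theorem tqft sh} specialized to $\Sh = \QCoh$: a field theory valued in the exotic target, with $\Gamma(\chi(M)) = \QCoh(\Maps(M,X))$ only after applying global sections --- not a field theory valued in $(n+1)\kr\Pr_{\st,k}$ whose value on $M$ is literally $\QCoh(\Maps(M,X))$.

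The paper closes this gap by importing $(n+1)\kr\QCoh$ from \cite{Thesis}, where it is built as a functor out of a higher category of correspondences with target genuinely $(n+1)\kr\Pr_{\st,k}$ (via $(n+1)\kr\QCoh(\Spec A) = n\kr\Mod_{\Mod_A}$ on affines and Kan extension to stacks), rather than from theorem \ref{theo exists categorification}. Your ``main obstacle'' paragraph senses that something must be checked but mislocates it: the issue is not whether $n$-fold looping of the tower recovers $\QCoh$ --- the main theorem guarantees $\Sh(-) = \Hom_{1\kr\Pr_{\Sh}}(1_{1\kr\Pr_{\Sh}}, 1\kr\Sh^\sharp(-))$ by construction --- but whether the ambient higher category in which the values live is $(n+1)\kr\Pr_{\st,k}$ at all. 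If you insist on deriving the result from theorem \ref{theo exists categorification}, you must apply it to $\QCoh$ on affine schemes, where corollary \ref{coro kunneth implica} identifies the output with $n\kr\Mod_{\QCoh(-)}$, and then separately extend to stacks and verify that the extension retains its functoriality on correspondences; that step is the content of \cite{Thesis} and is not formal.
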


The value of $\chi_{(n+1)\kr\QCoh, X}$ on the point is given by the $(\infty,n+1)$-category $(n+1)\kr\QCoh(X)$ of quasi-coherent sheaves of $(\infty, n)$-categories on $X$ which we introduced in \cite{Thesis}. The above theorem is a direct consequence of the functoriality properties of $(n+1)\kr\QCoh$. More precisely, it follows from the fact that $(n+1)\kr\QCoh$ gives rise to a functor out of a higher category of correspondences \cite{Thesis}, combined with a general construction of topological field theories valued in correspondences, \cite{CalaqueHaugsengScheimbauer} theorem D (see also \cite{BZNtraces} for a version of this theme in the context of one dimensional theories). From this perspective, the role of the categorical K\"{u}nneth formulas in factorization homology computations is played by the base change formulas in the theory of sheaves of higher categories.

The connection between $\chi_{(n+1)\kr\QCoh, X}$ and $\chi_{n\kr\Mor, \QCoh, X}$ is given by the affineness theorems of \cite{G, Thesis}:  for $X$ good enough, one has an equivalence 
\[
(n+1)\kr\QCoh(X) = n\kr\Mod_{\QCoh(X)}.
\]
As shown in \cite{StefanichDualizability}, $1$-affineness often fails beyond characteristic zero, so although theorem \ref{theorem qcoh} still holds, it cannot be rephrased in Morita theory terms.

The output of this note is a theory that allows one to generalize theorem \ref{theorem qcoh} beyond the quasi-coherent setting. In fact, there is nothing special about the setting of algebraic stacks: one may work on a general $\infty$-category with finite limits:

\begin{theorem}\label{theorem tqft sh}
Let $\Ccal$ be an $\infty$-category with finite limits and let $\Sh$ be a presentable six-functor formalism on $\Ccal$ (see section \ref{subsection categorify} below). Then there exists a sequence of presentable symmetric monoidal $(\infty, n+1)$-categories $n\kr\Pr_{\Sh}$ with $\End_{n\kr\Pr_{\Sh}}(1_{n\kr\Pr_{\Sh}}) = (n-1)\kr\Pr_{\Sh}$ for all $n \geq 2$, with the following feature: for every object $X$ in $\Ccal$ and every $n \geq 0$ there is a symmetric monoidal functor $
\chi_{(n+1)\kr\Sh, X} : (n+1)\kr\Cob \rightarrow (n+1)\kr\Pr_{\Sh}$
such that for each closed manifold $M$ of dimension $n$ there is an equivalence 
\[
\Gamma( \chi_{(n+1)\kr\Sh, X}(M) ) = \Sh(\Maps(M, X))
\]
where $\Gamma(-) = \Hom_{1\kr\Pr_{\Sh}}(1_{\kr\Pr_{\Sh}, -})$ denotes the passage to the underlying $\infty$-category.
\end{theorem}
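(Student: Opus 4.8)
The plan is to mirror the argument sketched for Theorem \ref{theorem qcoh}, replacing the functoriality of $(n+1)\kr\QCoh$ by the categorification output of this note. The starting point is that a presentable six-functor formalism $\Sh$ on $\Ccal$ is precisely the data of a symmetric monoidal representation $\rho_1$ of the $\infty$-category of correspondences $\operatorname{Corr}(\Ccal)$ in presentable $\infty$-categories; this is the $n=1$ input required by the main theorem. I would let $0\kr\Pr_{\Sh}$ and $1\kr\Pr_{\Sh}$ be the evident targets and declare $\Gamma = \Hom_{1\kr\Pr_{\Sh}}(1, -)$ to be the underlying-$\infty$-category functor, so that by construction $\Gamma \circ \rho_1$ recovers $\Sh$ on objects.

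Next I would invoke the main result of the note: feeding $\rho_1$ into the categorification procedure yields a compatible sequence of symmetric monoidal representations $\rho_n \colon \operatorname{Corr}_n(\Ccal) \to n\kr\Pr_{\Sh}$ of the $(\infty,n)$-category of correspondences, one for each $n \geq 1$. The presentable symmetric monoidal $(\infty,n+1)$-categories $n\kr\Pr_{\Sh}$ are produced as part of this output, and the compatibility of the sequence is exactly what furnishes the identification $\End_{n\kr\Pr_{\Sh}}(1_{n\kr\Pr_{\Sh}}) = (n-1)\kr\Pr_{\Sh}$ for $n \geq 2$: passing to endomorphisms of the unit drops the categorical level by one and returns the previous representation. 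Establishing this endomorphism identity rigorously --- that the categorification is genuinely inductive and that the unit behaves as claimed --- is the first point requiring care, but it is intended to be built into the construction rather than verified afterward.

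With the tower of representations in hand I would produce the field theories via the cobordism hypothesis. Every object $X$ of $\Ccal$ is fully dualizable --- indeed canonically self-dual --- in the symmetric monoidal $(\infty,n+1)$-category $\operatorname{Corr}_{n+1}(\Ccal)$, since duals and all higher adjoints in a category of correspondences are supplied by reversing spans. By \cite{CalaqueHaugsengScheimbauer} (theorem D), $X$ therefore classifies a symmetric monoidal functor $Z_X \colon (n+1)\kr\Cob \to \operatorname{Corr}_{n+1}(\Ccal)$. I would then set $\chi_{(n+1)\kr\Sh, X} := \rho_{n+1} \circ Z_X$, a symmetric monoidal functor $(n+1)\kr\Cob \to (n+1)\kr\Pr_{\Sh}$, as required.

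It remains to compute the value on a closed $n$-manifold $M$, and this is where the main difficulty lies. The correspondence-valued theory $Z_X$ is the span model of the $\sigma$-model with target $X$: a cobordism induces a span of homotopy types of manifolds, and the finite-limit-preserving functor $\Maps(-, X)$ converts these spans into correspondences in $\Ccal$, so that $M$, viewed as an $n$-fold endomorphism of the unit, is sent to $Z_X(M) = \Maps(M, X)$. Applying $\rho_{n+1}$ and iterating the endomorphism identity of the second step to descend through the categorical levels, $\chi_{(n+1)\kr\Sh, X}(M)$ becomes an object of $1\kr\Pr_{\Sh}$; applying $\Gamma$ and using that $\Gamma \circ \rho_{n+1}$ restricted to the unit recovers $\Sh$ then yields $\Gamma(\chi_{(n+1)\kr\Sh, X}(M)) = \Sh(\Maps(M, X))$. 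The hard part will be precisely this compatibility: one must check that the iterated dimensional reduction in $\operatorname{Corr}_{n+1}(\Ccal)$ really reproduces $\Maps(M, X)$ and, more seriously, that the representations $\rho_n$ intertwine this reduction with the passage to endomorphisms in $n\kr\Pr_{\Sh}$. This is the role played, in the $\QCoh$ setting, by the categorical K\"{u}nneth formula, and here it must be replaced by the base-change formulas for the categorified sheaf theory that the main construction guarantees.
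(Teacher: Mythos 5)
Your proposal follows essentially the same route as the paper: the paper derives theorem \ref{theorem tqft sh} by combining theorem \ref{theo exists categorification} (your tower $\rho_n = n\kr\Sh^\sharp$ with the unit-endomorphism identifications) with the construction of correspondence-valued topological field theories from \cite{CalaqueHaugsengScheimbauer} theorem D, exactly as you do. The only cosmetic difference is that you frame the existence of $Z_X$ as an application of the cobordism hypothesis via full dualizability of $X$ in $(n+1)\kr\Corr(\Ccal)$, whereas the cited result is an unconditional direct construction; the substance and the verification that $Z_X(M) = \Maps(M,X)$ descends through the endomorphism identifications to $\Sh(\Maps(M,X))$ are the same.
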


Theorem \ref{theorem tqft sh} works formally just like theorem \ref{theorem qcoh} and is a direct consequence of the existence of a family of categorifications $n\kr\Sh$ of $\Sh$ with good  functoriality properties, which is what this note aims to establish. The only extra twist beyond the quasi-coherent setting is that the targets $n\kr\Pr_{\Sh}$ may be somewhat exotic. One in fact has that $n\kr\Pr_{\Sh} = (n+1)\kr\Sh(1_\ccal)$, and  for each closed manifold $M$ of dimension $0 \leq d \leq n$, the value of $\chi_{(n+1)\kr\Sh, X}$ on $M$ is given by a canonical upgrade of $(n+1-d)\kr\Sh(X)$ to $(n+2-d)\kr\Sh(1_{\Ccal})$.

The topological field theories arising from theorem \ref{theorem tqft sh} are particularly interesting in the case $\Sh = \IndCoh$. Specializing to dimension three, one obtains the following:

\begin{corollary}\label{corollary rw}
Let $X$ be a smooth scheme over a field $k$ of characteristic zero. Then there exists a symmetric monoidal functor $\chi_{2\kr\IndCoh, X}: 2\kr\Cob \rightarrow 2\kr\Pr_{\IndCoh}$ such that
\[
\chi_{2\kr\IndCoh, X}(S^1) = \IndCoh(\Maps(S^1, X)) = \QCoh(T^*[2]X).
\]
\end{corollary}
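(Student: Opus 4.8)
The plan is to deduce this from Theorem~\ref{theorem tqft sh} applied to the six-functor formalism $\Sh = \IndCoh$, together with an explicit computation of the value on the circle. First I would take $\Ccal$ to be the $\infty$-category of (derived) stacks locally of finite type over $k$, on which $\IndCoh$ is a presentable six-functor formalism in the sense of Section~\ref{subsection categorify}, as established by the theory of Gaitsgory and Rozenblyum. Specializing Theorem~\ref{theorem tqft sh} to this $\Sh$ and to $n = 1$ produces the symmetric monoidal functor $\chi_{2\kr\IndCoh, X} \colon 2\kr\Cob \to 2\kr\Pr_{\IndCoh}$, and for the closed $1$-manifold $S^1$ it yields the equivalence $\Gamma\bigl(\chi_{2\kr\IndCoh, X}(S^1)\bigr) \simeq \IndCoh(\Maps(S^1, X))$.

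Next I would remove the symbol $\Gamma$. Since $S^1$ is a closed manifold of dimension $1 = n$, its image is a $1$-endomorphism of the unit, hence an object of $\End_{2\kr\Pr_{\IndCoh}}(1_{2\kr\Pr_{\IndCoh}}) = 1\kr\Pr_{\IndCoh}$. Under the identification $n\kr\Pr_{\IndCoh} = (n+1)\kr\IndCoh(1_\Ccal)$ at $n = 1$, and since $1_\Ccal = \operatorname{Spec} k$ has $\IndCoh(\operatorname{Spec} k)$ equal to the $\infty$-category of $k$-modules, this endomorphism category is $\Pr_{\st,k}$, on which $\Gamma = \Hom_{1\kr\Pr_{\IndCoh}}(1_{1\kr\Pr_{\IndCoh}}, -)$ merely records the underlying $\infty$-category. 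As $\IndCoh(\Maps(S^1, X))$ carries its evident $k$-linear presentable structure, the equivalence of the first step refines to $\chi_{2\kr\IndCoh, X}(S^1) \simeq \IndCoh(\Maps(S^1, X))$, which is the first asserted identification.

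It then remains to compute $\IndCoh(\Maps(S^1, X))$. Here I would use that $\Maps(S^1, X)$ is the derived loop space $LX = X \times_{X \times X} X$, and that for $X$ smooth over a field of characteristic zero the Hochschild--Kostant--Rosenberg, or exponential, identification gives $LX \simeq T_X[-1] := \operatorname{Spec}_X \operatorname{Sym}_{\mathcal{O}_X}(\Omega^1_X[1])$. Koszul duality between the odd symmetric algebra $\operatorname{Sym}(\Omega^1_X[1])$ and the even symmetric algebra $\operatorname{Sym}(T_X[-2])$, in the form that exchanges $\IndCoh$ of the former with $\QCoh$ of the latter, then yields $\IndCoh(T_X[-1]) \simeq \QCoh\bigl(\operatorname{Spec}_X \operatorname{Sym}_{\mathcal{O}_X}(T_X[-2])\bigr) = \QCoh(T^*[2]X)$, completing the chain of equivalences.

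The first two steps are formal, being consequences of Theorem~\ref{theorem tqft sh} and of the structure of $2\kr\Pr_{\IndCoh}$ over a point, so I expect the substance to lie entirely in the last step. The main obstacle is the Koszul duality equivalence $\IndCoh(LX) \simeq \QCoh(T^*[2]X)$: it is precisely here that smoothness and characteristic zero are used, that the passage from $\IndCoh$ to $\QCoh$ and the cohomological shift by $2$ underlying the $2$-shifted symplectic structure of Rozansky--Witten theory genuinely enter, and that one must control the interaction of Koszul duality with ind-coherent, rather than quasi-coherent, sheaves on the non-smooth derived scheme $LX$.
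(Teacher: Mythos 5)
Your proposal is correct and follows the same route the paper intends: the corollary is obtained by specializing theorem \ref{theorem tqft sh} to $\Sh = \IndCoh$ and $n=1$, identifying $\chi_{2\kr\IndCoh,X}(S^1)$ with its underlying category via $\End_{2\kr\Pr_{\IndCoh}}(1) = 1\kr\Pr_{\IndCoh}$, and then invoking the standard HKR--Koszul duality equivalence $\IndCoh(LX) \simeq \QCoh(T^*[2]X)$ of Arinkin--Gaitsgory for smooth $X$ in characteristic zero. You correctly isolate the last step as the only non-formal input, and your handling of the $\Gamma$ is, if anything, more explicit than the paper's.
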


The topological field theory from corollary \ref{corollary rw} may be thought of as a mathematical incarnation of the Rozansky-Witten theory of $T^*X$ \cite{KRS}. More precisely, $\chi_{2\kr\IndCoh, X}$ satisfies the design criteria for a conical version of Rozansky-Witten theory; non-conical versions can also be obtained after a suitable two-periodization of this construction. As one may expect, $2\kr\IndCoh$ enjoys symmetries arising from symmetries of $T^*X$; the most fundamental of these is a $2$-categorical Fourier transform, which is the subject of ongoing work joint with David Ben-Zvi and David Nadler.

The $\infty$-category $\IndCoh(\Maps(S^1, X))$ arises not only as the value on $S^1$ of the theory, but also as the center: $\IndCoh(\Maps(S^1, X))$ has a canonical acion on $2\kr\IndCoh(X)$. Via Koszul duality, this allows one to express $2\kr\IndCoh(X)$ as the global sections of a sheaf of $(\infty,2)$-categories on $T^*X$ with its conical Zariski topology. In other words, the theory of ind-coherent sheaves of categories on $X$ is not only local on $X$, but also admits a microlocal theory, which may be thought of as a categorification of the microlocal theory of ind-coherent sheaves developed in \cite{AG}.

The theory of ind-coherent sheaves of categories lacks some features present in the classical theory of $\IndCoh$. Fundamentally, one could hope for a notion of coherent sheaves of categories, including inside $2\QCoh(X)$ and which recovers $2\kr\IndCoh(X)$ via a suitable completion procedure. In work in progress with Carlos di Fiore, we show that a version of this hope can be realized if one works in the context of sheaves of small idempotent complete stable categories. The resulting microlocal picture very closely matches the one from the theory of D-modules: the singular support measures the difference between perfect and coherent sheaves of categories, and between star and shriek pullbacks, in what can be thought of as a categorification of the theory of vanishing cycles.

In four dimensions, theorem \ref{theorem tqft sh} may be specialized to yield the following:

\begin{corollary}\label{corollary langlands}
Let $G$ be an affine algebraic group over a field $k$ of characteristic zero. Then there exists a  symmetric monoidal functor $\chi_{3\kr\IndCoh, \B G} : 3\kr\Cob \rightarrow 3\kr\Pr_{\IndCoh}$ such that for every closed manifold $M$ of dimension $2$ we have an equivalence 
\[
\chi_{3\kr\IndCoh, \B G} (M) = \IndCoh(\Maps(M, X)).
\]
\end{corollary}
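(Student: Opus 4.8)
The plan is to obtain Corollary \ref{corollary langlands} as a direct specialization of Theorem \ref{theorem tqft sh}, with the bulk of the work being the verification that the hypotheses of that theorem are met and that the abstract output $\Gamma(\chi_{3\kr\IndCoh, \B G}(M))$ reduces to the concrete expression $\IndCoh(\Maps(M, \B G))$. First I would instantiate Theorem \ref{theorem tqft sh} with $\Ccal$ the $\infty$-category of (derived) prestacks over $k$ (or a suitable subcategory containing $\B G$ and closed under the relevant finite limits), and $\Sh = \IndCoh$, which is a presentable six-functor formalism in the sense of section \ref{subsection categorify}. Taking $n = 2$ and $X = \B G$, the theorem immediately produces a symmetric monoidal functor out of a cobordism category; the point $\dim M = 2 = n$ is exactly the top dimension, so the target higher category $3\kr\Pr_{\IndCoh}$ and the functor $\chi_{3\kr\IndCoh, \B G}: 3\kr\Cob \to 3\kr\Pr_{\IndCoh}$ are precisely what the theorem delivers. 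The characteristic-zero and affineness assumptions on $G$ are what allow $\IndCoh$ to be well-behaved on $\B G$ and its iterated mapping stacks.

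Next I would unwind the final equivalence of Theorem \ref{theorem tqft sh}, which for a closed $M$ of dimension $n = 2$ reads $\Gamma(\chi_{(n+1)\kr\Sh, X}(M)) = \Sh(\Maps(M, X))$. Since here $\dim M = n$ equals the top dimension, the value $\chi_{3\kr\IndCoh, \B G}(M)$ is itself a $1\kr\Pr_{\IndCoh}$-object, i.e.\ an honest presentable $\infty$-category rather than a higher category, so the passage to global sections $\Gamma$ is in this case the identity up to the equivalence $\Gamma(-) = \Hom_{1\kr\Pr_{\IndCoh}}(1_{\kr\Pr_{\IndCoh}}, -)$. Thus the abstract equivalence specializes to $\chi_{3\kr\IndCoh, \B G}(M) = \IndCoh(\Maps(M, \B G))$, which is the asserted formula (writing $X = \B G$). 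I would make explicit that $\Maps(M, \B G)$ is the derived moduli stack of $G$-local systems on $M$, so that the right-hand side is the expected ind-coherent sheaf category on the character stack of the surface $M$.

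The main obstacle I anticipate is not the formal specialization but confirming that the hypotheses of Theorem \ref{theorem tqft sh} genuinely apply to $\IndCoh$ on the mapping stacks $\Maps(M, \B G)$ for $M$ a closed surface. Concretely, one must check that these mapping stacks lie inside the chosen $\Ccal$ with finite limits and that $\IndCoh$ on them fits into the six-functor formalism with the base-change properties that drive the proof of Theorem \ref{theorem tqft sh}; this is where the hypotheses ``affine algebraic group'' and ``characteristic zero'' do real work, ensuring that $\Maps(M, \B G)$ is a reasonable (e.g.\ quasi-smooth, or locally of finite presentation) derived stack on which $\IndCoh$ behaves. I would isolate this as the single nontrivial verification and reduce everything else to citing Theorem \ref{theorem tqft sh} together with the identification of its top-dimensional values via $\Gamma$. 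Once this verification is in place, the corollary follows formally.
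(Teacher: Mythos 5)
Your proposal is correct and follows exactly the route the paper takes: Corollary \ref{corollary langlands} is presented there purely as the specialization of Theorem \ref{theorem tqft sh} to $\Sh = \IndCoh$, $X = \B G$, $n = 2$, with no separate argument given. Your additional remarks --- that the only substantive input is having $\IndCoh$ as a presentable six-functor formalism on a finite-limit category containing $\B G$ and its mapping stacks, and that for top-dimensional $M$ the value lands in $1\kr\Pr_{\IndCoh}$ so that $\Gamma$ just extracts the underlying presentable category $\IndCoh(\Maps(M,\B G))$ --- accurately identify what is implicit in the paper's statement.
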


Corollary \ref{corollary langlands} is consistent with the expectation that the various structures arising in the Betti form of the geometric Langlands program may be organized into the framework of $4$-dimensional topological field theory  \cite{KapustinWitten, BZN}. More precisely, it provides a realization of a version of the spectral Betti geometric Langlands TQFT with no restriction on the central parameters (that is, no conditions of nilpotent singular support).


\subsection*{Categorification of six-functor formalisms}\label{subsection categorify}

As remarked above, theorem \ref{theorem tqft sh} is really a consequence of a general result on the existence of categorifications of a sheaf theory, which we proceed to explain. Let $\ccal$ be an $\infty$-category with finite limits. Associated to $\ccal$ there is a symmetric monoidal $\infty$-category $\Corr(\ccal)$ called the $\infty$-category of correspondences, with the following features:
\begin{itemize}
\item The anima of objects of $\Corr(\ccal)$ agrees with the anima of objects of $\ccal$.
\item Let $X$ and $Y$ be a pair of objects of $\ccal$. Then a morphism from $X$ to $Y$ in $\Corr(\ccal)$ consists of a span $X \leftarrow S \rightarrow Y$ in $\ccal$.
\item Let $X \leftarrow S \rightarrow Y$ and $Y \leftarrow T \rightarrow Z$ be a pair of spans in $\ccal$, which we regard as morphisms in $\Corr(\ccal)$. Then their composition is given by the span $X \leftarrow S \times_Y T \rightarrow Z$.
\item Let $X$ and $Y$ be a pair of objects of $\ccal$. Then their tensor product in $\Corr(\ccal)$ is given by $X \times Y$.
\end{itemize}

As explained in \cite{GR}, the $\infty$-category $\Corr(\ccal)$ (and variants of it) provides a convenient way of capturing the functoriality present in various sheaf theories of interest. More precisely, if $\Tcal$ is a symmetric monoidal $(\infty,2)$-category whose objects we think about as being $\infty$-categories of some sort, then a lax symmetric monoidal functor $\Sh: \Corr(\ccal) \rightarrow \Tcal$ gives rise to the following data:
\begin{itemize}
\item For each object $X$ in $\ccal$ an object $\Sh(X)$ in $\Tcal$ which we think of as the $\infty$-category of sheaves on $X$.
\item For each map $f: X \rightarrow Y$ in $\ccal$ a pair of morphisms $f^*: \Sh(Y) \rightarrow \Sh(X)$ and $f_! : \Sh(X) \rightarrow \Sh(Y)$ which we think of as pullback and pushforward functors.
\item For each cartesian square
\[
\begin{tikzcd}
X' \arrow{d}{f'} \arrow{r}{g'} & X \arrow{d}{f} \\
Y' \arrow{r}{g} & Y
\end{tikzcd}
\]
in $\ccal$, an isomorphism $f'_! g'^* = g^*f_!$.
\item For each pair of objects $X, Y$ in $\ccal$, a morphism $\boxtimes: \Sh(X) \otimes \Sh(Y) \rightarrow \Sh(X \times Y)$ which we think about as the exterior tensor product functor.
\end{itemize}

Lax symmetric monoidal functors out of $\Corr(\Ccal)$ are called three functor formalisms, and requiring right adjoints to $f_!, f^*$ and $\Fcal \boxtimes -$ one arrives at the notion of a  six-functor formalism \cite{Mann, ScholzeSix}. The existence of these extra adjoints is automatic when $\Sh$ takes values in presentable categories and all the functors preserve colimits. In this case, a lax symmetric monoidal functor $\Sh: \Corr(\Ccal) \rightarrow \Pr$ is called a presentable six-functor formalism.

As explained in \cite{HSTI, Thesis}, when working with theories of sheaves of higher categories one tends to encounter much stronger functoriality properties than in ordinary sheaf theory. In this context, the role of $\Corr(\ccal)$ is played by the symmetric monoidal $(\infty,n)$-category $n\kr\Corr(\ccal)$, which in the case $n = 1$ agrees with $\Corr(\ccal)$ and for $n > 1$ admits the following informal inductive description:
\begin{itemize}
\item  The anima of objects of $n\kr\Corr(\ccal)$ agrees with the anima of objects of $\ccal$.
\item Let $X$ and $Y$ be a pair of objects of $\ccal$. Then the $(\infty,n-1)$-category of morphisms from $X$ to $Y$ in $n\kr\Corr(\ccal)$ is given by $(n-1)\kr\Corr({}_{X\backslash}\ccal_{/Y})$.
\item Let $X$ and $Y$ be a pair of objects of $\ccal$.  Then their tensor product in $n\kr\Corr(\ccal)$ is given by $X \times Y$.
\end{itemize}

Our main result constructs, for every presentable six-functor formalism $\Sh: \Corr(\ccal) \rightarrow \Pr$, a compatible sequence of categorified formalisms $n\kr\Sh: n\kr\Corr(\ccal) \rightarrow n\kr\Pr$ which we think about as providing theories of sheaves of higher  categories of flavor $\Sh$. More precisely, we have the following:

\begin{theorem}\label{theo exists categorification}
Let $\ccal$ be an $\infty$-category with finite limits, and let $\Sh: \Corr(\ccal) \rightarrow \Pr$ be a lax symmetric monoidal functor. Then there exists:
\begin{itemize}
\item A sequence of presentable symmetric monoidal $(\infty,n+1)$-categories $n\kr\Pr_{\Sh}$ with  $\End_{n\kr\Pr_{\Sh}}(1_{n\kr\Pr_{\Sh}}) = (n-1)\kr\Pr_{\Sh}$ for all $n \geq 2$.
\item  A sequence of symmetric monoidal functors $n\kr\Sh^\sharp : n\kr\Corr(\ccal) \rightarrow n\kr\Pr_{\Sh}$ such that the resulting square
\[
\begin{tikzcd}
\End_{n\kr\Corr(\ccal)}(1_\ccal) \arrow{r}{n\kr\Sh^\sharp} \arrow{d}{=} & \End_{n\kr\Pr_{\Sh}}(1_{n\kr\Pr_{\Sh}}) \arrow{d}{=} \\
(n-1)\kr\Corr(\ccal) \arrow{r}{(n-1)\kr\Sh^\sharp} & (n-1)\kr\Pr_{\Sh}
\end{tikzcd}
\]
commutes, and with the feature that $\Sh(-) = \Hom_{1\kr\Pr_{\Sh}}(1_{1\kr\Pr_{\Sh}}, 1\kr\Sh^\sharp(-))$.
\end{itemize}
The above are furthermore subject to the following properties:
\begin{itemize}
\item Given $n \geq 2$ and an object $X$ in $\Ccal$, the $(\infty,n)$-category of maps $1_{n\kr\Pr_{\Sh}} \rightarrow n\kr\Sh(X)^\sharp$ is generated under colimits by the image under $n\kr\Sh^\sharp$ of the morphisms $1_{n\kr\Corr(\Ccal)} \rightarrow X$ in $n\kr\Corr(\Ccal)$.
\item The unique symmetric monoidal functor $n\kr\Pr \rightarrow n\kr\Pr_{\Sh}$ admits a colimit preserving right adjoint $\Gamma(-)$.
\end{itemize} 
\end{theorem}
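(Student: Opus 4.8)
The plan is to argue by induction on $n$, producing the pair $(n\kr\Pr_{\Sh}, n\kr\Sh^\sharp)$ out of the pair $((n-1)\kr\Pr_{\Sh}, (n-1)\kr\Sh^\sharp)$, with the given lax functor $\Sh$ serving as the input at level $0$. The motor of the induction is the self-similarity of higher correspondences recorded in the inductive description above: the $(\infty,n-1)$-category of maps from $X$ to $Y$ in $n\kr\Corr(\ccal)$ is $(n-1)\kr\Corr({}_{X\backslash}\ccal_{/Y})$, and each such Hom-category is again the category of correspondences of a finite-limit overcategory. The categorified representation data I must supply on these Hom-categories is obtained by transporting $\Sh$ along the evident forgetful functors to $\ccal$, the base-change coherences of the six-functor formalism supplying the required compatibilities; this is precisely the shape of input that the inductive hypothesis consumes.

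The first and most structurally delicate step is the base case $n=1$, the passage from the \emph{lax} functor $\Sh$ to a \emph{strict} symmetric monoidal functor $1\kr\Sh^\sharp$. The naive candidate target $\Mod_{\Sh(1_\ccal)}(\Pr)$ is wrong, because in it strictness would read $\Sh(X)\otimes_{\Sh(1_\ccal)}\Sh(Y) = \Sh(X\times Y)$, precisely the categorical K\"{u}nneth formula that we are told routinely fails. My remedy is to let $1\kr\Pr_{\Sh}$ be an \emph{exotic} enlargement: I would realize it as a category of modules over $\Sh$ itself, viewed as a commutative algebra object in a Day-convolution category of $\Pr$-valued presheaves on $\ccal$. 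Evaluation at the terminal object $1_\ccal$ then recovers $\Sh$ on sections, giving $\Hom_{1\kr\Pr_{\Sh}}(1, 1\kr\Sh^\sharp(-)) = \Sh(-)$ and $\End_{1\kr\Pr_{\Sh}}(1) = \Sh(1_\ccal)$, while strictness of $1\kr\Sh^\sharp$ holds by the universal property of module categories. The essential point is that the tensor product of this target is Day-convolutional rather than the pointwise module tensor product, so that $\Hom(1, 1\kr\Sh^\sharp(X)\otimes 1\kr\Sh^\sharp(Y)) = \Sh(X\times Y)$ \emph{by construction}; the failure of K\"{u}nneth is thereby absorbed into the enlargement (the ``exoticness'') of the target instead of obstructing strictness.

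With the base case in hand I would run the inductive step by assembling the slice-wise representations into a single functor $n\kr\Sh^\sharp$, and define $n\kr\Pr_{\Sh}$ simultaneously as the delooping of $(n-1)\kr\Pr_{\Sh}$: the presentable symmetric monoidal $(\infty,n+1)$-category whose unit endomorphisms are $(n-1)\kr\Pr_{\Sh}$ by construction and whose objects are the sheaves of $(\infty,n)$-categories generated under colimits by the images of the correspondences $1_\ccal \to X$. The commuting square relating the $n$- and $(n-1)$-level functors is then tautological, being the identification of $\End$ of the unit on both sides, under which the restriction of $n\kr\Sh^\sharp$ is by definition $(n-1)\kr\Sh^\sharp$. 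The two listed properties follow: generation under colimits for $n \geq 2$ holds by construction, and for the right adjoint $\Gamma$ I would observe that the canonical functor $n\kr\Pr \rightarrow n\kr\Pr_{\Sh}$ is colimit-preserving between presentable $(\infty,n+1)$-categories, hence admits a right adjoint by the adjoint functor theorem; this right adjoint is corepresented by the unit $1_{n\kr\Pr_{\Sh}}$, and its colimit-preservation amounts to atomicity of that unit, which holds because the unit is the image of $1_\ccal$ among the colimit-generators of the target.

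The hard part is not the bookkeeping of objects and $1$-morphisms, but upgrading this data to a \emph{fully coherent} symmetric monoidal functor of $(\infty,n)$-categories. What is really needed is a recognition principle for symmetric monoidal functors out of $n\kr\Corr(\ccal)$: a universal property expressing such a functor in terms of its Hom-wise restrictions together with the descent and base-change compatibilities coming from composition of spans. Granting such a principle, the slice-wise data supplied by the inductive hypothesis glue automatically and the whole tower is produced at once; the delicate points are verifying that the compatibilities demanded by this universal property are exactly those encoded in the six-functor structure of $\Sh$, and that the delooping interacts correctly with the tensor product so as to keep the target symmetric monoidal and presentable at every level. I expect that establishing and applying this recognition principle for $n\kr\Corr$ is where the bulk of the work lies.
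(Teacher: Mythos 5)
Your proposal correctly locates the crux---upgrading Hom-wise data to a fully coherent symmetric monoidal functor out of $n\kr\Corr(\ccal)$---but it does not supply it. The entire inductive step rests on a ``recognition principle for symmetric monoidal functors out of $n\kr\Corr(\ccal)$'' that is never formulated, let alone established, and you yourself place ``the bulk of the work'' there. This cannot be deferred: gluing the slice-wise representations on the Hom-categories $(n-1)\kr\Corr({}_{X\backslash}\ccal_{/Y})$ into a functor of $(\infty,n)$-categories requires controlling the full hierarchy of compatibilities coming from composition of spans, and the claim that these are ``precisely'' the base-change coherences carried by $\Sh$ is exactly what would have to be proved. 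Similarly, declaring $n\kr\Pr_{\Sh}$ to be ``the delooping of $(n-1)\kr\Pr_{\Sh}$'' with prescribed objects, unit endomorphisms and colimit generators is a specification rather than a construction, and the colimit-preservation of $\Gamma$ is reduced to an ``atomicity'' of the unit that is asserted, not derived. As it stands the proof has a genuine gap at its central step.

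For comparison, the paper's argument avoids any such recognition principle by solving the coherence problem once and for all at the level of $n\kr\Corr(\ccal)$ itself, before $\Sh$ enters. Using that every object of $\Corr(\ccal)$ is self-dual, propositions \ref{proposition internal hom} and \ref{prop is closed} exhibit $n\kr\Corr(\ccal)$ as a closed module over $B^{n-1}\Corr(\ccal)$, and the equivalence $\Gamma_A^n$ of corollary \ref{coro gamman equiv} converts this module structure into a $\Corr(\ccal)$-enrichment $n\kr\Corr^\enr(\ccal)$ whose Hom objects are the fiber products $Y\times_S Z$. The only input then required from $\Sh$ is its lax symmetric monoidal structure: the canonical transformation $\Gamma_{\Corr(\Ccal)} \rightarrow \Sh$, which exists by initiality of the corepresented functor, induces a change of enrichment
\[
n\kr\Corr(\ccal) = (\Gamma_{\Corr(\Ccal)})_!\, n\kr\Corr^\enr(\ccal) \longrightarrow \Sh_!\, n\kr\Corr^\enr(\ccal),
\]
and $n\kr\Pr_{\Sh}$ is obtained by freely adjoining colimits via $n\kr\Mod$ (construction \ref{construction categorification}); all coherence is absorbed into the change-of-enrichment formalism. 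Your Day-convolution description of the base case is plausibly equivalent to $1\kr\Mod_{\Sh_!\Corr^\enr(\ccal)}$, but the identities you claim for it ($\Hom(1,1\kr\Sh^\sharp(X))=\Sh(X)$, the behaviour of the tensor product) are computations you would still need to carry out. If you wish to rescue the inductive strategy, the missing recognition principle is essentially what the enrichment results above provide; without some substitute for them the argument is incomplete.
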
	

In the language of \cite{Thesis}, the sequence of higher presentable categories $n\kr\Pr_{\Sh}$ assembles into a categorical spectrum, and the sequence of functors $n\kr\Sh^\sharp$ assembles into a representation of the categorical spectrum of correspondences of $\ccal$.

Although in general the targets $n\kr\Pr_{\Sh}$ are different from $n\kr\Pr$, one may obtain (non symmetric monoidal) functors valued in $n\kr\Pr$ by setting $n\kr\Sh(-) = \Gamma(n\kr\Sh(-)^\sharp)$. In the case $n = 1$ this recovers the starting sheaf theory. In general, the conditions from the statement of theorem \ref{theo exists categorification} imply the following inductive description:
\begin{itemize}
\item $(n+1)\kr\Sh(X)$ is a presentable $(\infty,n+1)$-category freely generated under weighted colimits by the objects $n\kr\Sh(Y/X)$ which are obtained by pushforward along a map $Y \rightarrow X$ of the unit of $(n+1)\kr\Sh(Y)$.
\item Given a pair of maps $Y \rightarrow X$ and $Z \rightarrow X$ one has 
\[
\Hom(n\kr\Sh(Y/X), n\kr\Sh(Z/X)) = n\kr\Sh(Y\times_X Z).
\]
\end{itemize}

In the case when $\Sh = \IndCoh$, a construction of $2\kr\Sh(X)$ had been proposed by Arinkin and Gaitsgory in the case of smooth schemes; meanwhile, in the setting of \'etale sheaves, the $(\infty,2)$-category $2\kr\Sh(\Spec(k))$ is studied by Gaitsgory, Rozenblyum and Varshavsky, following an earlier proposal by Drinfeld, see \cite{GaitsgoryICM} sections 2.2 and 2.6. In general, at the level of generators the above description recovers, in the case $n = 1$, the so-called $(\infty,2)$-category of kernels of $\Sh$, which has become in recent years a fundamental tool in the study of six-functor formalisms \cite{LuZheng, FarguesScholze, HansenScholze, ScholzeSix, Zavyalov, HeyerMann}. 

The $(\infty, n)$-category $n\kr\Sh(X)$ is a priori rather complex: the above description shows that it has generators which are parametrized by maps $Y \rightarrow X$. It is however often possible to show that a smaller collection of generators suffice. We refer to section \ref{section compute} for a discussion of the basic computational toolset that allows this.


\subsection{Acknowledgments}

I am grateful to Ko Aoki, Dima Arinkin, David Ben-Zvi, Carlos di Fiore, Dennis Gaitsgory, David Nadler, Sam Raskin, Juan Esteban Rodr\'iguez Camargo, Peter Scholze, and Jackson Van-Dyke for conversations connected to the topic of this paper. Part of this work was carried out at the Max Planck Institute for Mathematics in Bonn, and I am grateful to the institute for its hospitality and support.


\subsection{Conventions and notation}\label{section conventions}

In the remainder of this text we follow the convention where the word category stands for $\infty$-category, and use the term $n$-category to refer to $(\infty,n)$-categories. We also use the terms scheme and stack to refer to spectral schemes and stacks.

We make use throughout this text of the theory of enriched (higher) categories. For each possibly large monoidal category $\Mcal$ we denote by $\Cat^\Mcal$ the category of $\Mcal$-enriched categories with a small set of isomorphism classes of objects. In the case when $\Mcal$ is symmetric monoidal, $\Cat^\Mcal$ has an induced symmetric monoidal structure, so it makes sense to define $n\kr\Cat^\Mcal$ inductively for all $n \geq 1$ as follows:
\begin{itemize}
\item If $n = 1$ then $n\kr\Cat^\Mcal = \Cat^\Mcal$.
\item If $n > 1$ then $n \kr\Cat^\Mcal = \Cat^{(n-1)\kr\Cat^\Mcal} = (n-1)\kr\Cat^{\Cat^\Mcal}$.
\end{itemize} 
Objects of $n\kr\Cat^\Mcal$ are called $\Mcal$-enriched $n$-categories. In the special case when $\Mcal$ is the category of small anima we set $n\kr\Cat = n\kr\Cat^{\Mcal}$, and call it the category of $n$-categories.

The assignment $\Mcal \mapsto \Cat^\Mcal$ is functorial in lax monoidal functors in $\Mcal$. Given such a functor $F: \Mcal \rightarrow \Mcal'$ we denote by $F_!: \Cat^\Mcal \rightarrow \Cat^{\Mcal'}$ the induced map. In the case when $F$ is a (lax) symmetric monoidal functor between symmetric monoidal categories we have that $F_!$ is also (lax) symmetric monoidal.

If $A$ is an algebra in a monoidal category $\Mcal$ we denote by $BA$ the corresponding $\Mcal$-enriched category. The assignment $A \mapsto BA$ provides a fully faithful embedding from the category of algebras in $\Mcal$ into the category of pointed $\Mcal$-enriched categories. We denote by $\Omega$ its right adjoint; in other words, this is the functor which sends a pointed $\Mcal$-enriched category $(\Dcal, X_0)$ to the algebra of endomorphisms of $X_0$.

In the case when $\Mcal$ is symmetric monoidal, the assignment $A \mapsto BA$ has a symmetric monoidal structure. In particular, if $A$ is a commutative algebra then $BA$ has the structure of symmetric monoidal $\Mcal$-enriched category. Iterating the construction $A \mapsto BA$ we may thus define for each $n \geq 1$ a symmetric monoidal $\Mcal$-enriched $n$-category $B^nA$. The assignment $A \mapsto B^nA$ has a right adjoint which we denote by $\Omega^n$.

 
\section{Enrichment of \texorpdfstring{$n\kr\Corr(\ccal)$}{nCorr(C)}}

Let $\ccal$ be a category with finite limits. Then every object of $\Corr(\ccal)$ is self dual. In particular, the symmetric monoidal structure on $\Corr(\ccal)$ is closed, so that $\Corr(\ccal)$ has a canonical structure of $\Corr(\ccal)$-enriched category.  The goal of this section is to show that, more generally, $n\kr\Corr(\ccal)$ may be given the structure of a $\Corr(\ccal)$-enriched $n$-category for all $n \geq 1$.

\begin{notation}\label{notation gamma}
Let $\Mcal$ be a presentable symmetric monoidal category, and let $A$ be a commutative algebra in $\Mcal$. Let $G: \Mod_A(\Mcal) \rightarrow \Mcal$ be the forgetful functor, which we equip with its canonical lax symmetric monoidal structure. Consider the induced lax symmetric monoidal functor
\[
G_!: \Cat^{\Mod_A(\Mcal)} \rightarrow \Cat^\Mcal.
\]
This induces a lax symmetric monoidal functor
\[
\Cat^{\Mod_A(\Mcal)} = \Mod_{1_{\Cat^{\Mod_A(\Mcal)}}}(\Cat^{\Mod_A(\Mcal)}) \rightarrow \Mod_{G_! 1_{\Cat^{\Mod_A(\Mcal)}}}(\Cat^\Mcal).
\]
Observe that $G_! 1_{1_{\Cat^{\Mod_A(\Mcal)}}} = BA$ as symmetric monoidal $\Mcal$-enriched categories. We denote by 
\[
\Gamma_A : \Cat^{\Mod_A(\Mcal)} \rightarrow \Mod_{BA}(\Cat^\Mcal)
\]
the induced lax symmetric monoidal functor.
\end{notation}

\begin{proposition}\label{prop gamma equivalence}
Let $\Mcal$ be a presentable symmetric monoidal category and let $A$ be a commutative algebra in $\Mcal$. Then the lax symmetric monoidal functor $\Gamma_A$ from notation \ref{notation gamma} is a symmetric monoidal equivalence. 
\end{proposition}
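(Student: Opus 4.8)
The plan is to prove the stronger statement that the underlying functor of $\Gamma_A$ is an equivalence of $\infty$-categories; the symmetric monoidal assertion then follows for free. Indeed, $\Gamma_A$ is lax symmetric monoidal by construction, and a lax symmetric monoidal functor whose underlying functor is an equivalence is automatically a symmetric monoidal equivalence: viewing it as a map of cocartesian fibrations over $\mathrm{Fin}_\ast$, an underlying equivalence is a fiberwise equivalence, hence an equivalence of total categories, hence preserves cocartesian edges and is therefore strong. So it suffices to exhibit $\Gamma_A$ as the comparison functor of a monadic adjunction whose monad is $BA \otimes -$.

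First I would record the adjunction underlying the construction. The forgetful functor $G \colon \Mod_A(\Mcal) \to \Mcal$ has a strong symmetric monoidal left adjoint $F = A \otimes -$, and since the assignment $\Mcal \mapsto \Cat^\Mcal$ is functorial in lax monoidal functors and sends the unit and counit of $F \dashv G$ to enriched natural transformations, it induces an adjunction $F_! \dashv G_!$ with $F_!$ strong symmetric monoidal. By functoriality under composition, $G_! F_! = (GF)_!$, and $GF = A \otimes -$ is the lax symmetric monoidal endofunctor of $\Mcal$ underlying the commutative algebra $A$; its image under $\Cat^{(-)}$ is the endofunctor $BA \otimes -$ of $\Cat^\Mcal$, since both functors fix the anima of objects and act on mapping objects by $- \otimes A$. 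Thus the monad $G_! F_!$ is identified with the monad associated to $BA \in \mathrm{CAlg}(\Cat^\Mcal)$. Unwinding notation \ref{notation gamma}, the functor $\Gamma_A$ is precisely the comparison functor of $F_! \dashv G_!$ into modules over this monad: the forgetful functor $U \colon \Mod_{BA}(\Cat^\Mcal) \to \Cat^\Mcal$ satisfies $U \circ \Gamma_A = G_!$ and $\Gamma_A$ carries $F_!$ to the free $BA$-module functor. As $U$ is itself monadic with the same monad $BA \otimes -$, the functor $\Gamma_A$ is an equivalence if and only if $G_!$ is monadic.

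It then remains to verify the hypotheses of Barr--Beck--Lurie for $G_!$. Conservativity is straightforward: $G_!$ is the identity on the anima of objects and applies the conservative functor $G$ on mapping objects, while a map of enriched categories is an equivalence exactly when it is essentially surjective and an equivalence on mapping objects; hence change of enrichment along a conservative functor is conservative. The colimit condition is the step I expect to be the main obstacle: one must show that $G_!$ preserves the geometric realization of every $G_!$-split simplicial object. This is not formal, precisely because colimits in $\Cat^{\Mod_A(\Mcal)}$ are not computed on mapping objects, so the behaviour of $G_!$ on realizations cannot be read off from $G$ alone.

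To overcome this I would exploit that the forgetful functor $G$ from $A$-modules preserves all colimits, together with the compatibility of the construction $\Cat^{(-)}$ with sifted colimits in the enriching variable: change of enrichment along a lax monoidal functor that preserves sifted colimits preserves sifted colimits of enriched categories, and geometric realizations are sifted. Establishing this compatibility is where the genuine work lies, since it requires controlling how the operadic free-category functor defining $\Cat^{(-)}$ interacts with sifted colimits of $\Mcal$. Granting it, $G_!$ preserves all geometric realizations, so in particular the $G_!$-split ones, and Barr--Beck--Lurie applies to show $G_!$ is monadic. Combined with the identification of the monad in the second step, this yields that $\Gamma_A$ is an equivalence of underlying categories, and hence a symmetric monoidal equivalence.
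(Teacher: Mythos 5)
Your overall strategy---exhibit $\Gamma_A$ as the comparison functor for the adjunction $F_! \dashv G_!$, identify the monad $G_!F_!$ with $BA \otimes -$, and invoke Barr--Beck--Lurie---is the same as the paper's, but two of your steps have real gaps. The most serious is the colimit condition, which you correctly identify as the crux but then only ``grant''. You propose to prove the much stronger statement that change of enrichment along any sifted-colimit-preserving lax monoidal functor preserves all sifted colimits of enriched categories. Besides being harder than what is needed, working directly in $\Cat^{\Mcal}$ adds an obstacle you do not address: geometric realizations there are computed by realizing in the category $\Algbrd(\Mcal)$ of not-necessarily-univalent enriched categories and then completing, and there is no reason in general for $G_!$ to commute with the completion step. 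The paper avoids both problems: it first reduces the whole statement to the corresponding one for algebroids, and then observes that the only simplicial objects that need to be handled are those whose image under $G^{\pre}_!$ is a Bar resolution of a $B^{\pre}A$-module; since $B^{\pre}A$ has a single object, such a simplicial object has constant underlying anima of objects, so its realization is computed in $\Algbrd_J(\Mcal)$ for a fixed anima $J$, i.e.\ on mapping objects, where preservation follows directly from $G$ preserving geometric realizations. To salvage your route you would either have to prove the general sifted-colimit claim (including its interaction with univalent completion) or, better, import this reduction.

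The second gap is your opening claim that a lax symmetric monoidal functor whose underlying functor is an equivalence is automatically strong. This is false: for instance, the identity functor on nonnegatively graded vector spaces admits a lax symmetric monoidal structure whose structure map acts on $V_m \otimes W_n$ by $0^{mn}$ (identity if $m$ or $n$ is zero, and zero otherwise); unitality only constrains the components meeting degree zero, and associativity holds since $mn + (m+n)p = np + m(n+p)$. Your fibration argument breaks down because a lax symmetric monoidal functor is only required to preserve inert cocartesian edges, and being an equivalence on the fibre over $\langle 1\rangle$ neither makes the induced map on total categories an equivalence nor forces the non-inert cocartesian edges to be preserved. In the situation at hand the monoidal structure has to be extracted from the monadicity argument itself: this is the role of condition (i) in the paper's proof, the strict compatibility of $G^{\pre}_!$ with the $\Algbrd(\Mcal)$-action (a projection formula), which is exactly what upgrades the comparison functor into $\Mod_{B^{\pre}A}(\Algbrd(\Mcal))$ from lax symmetric monoidal to symmetric monoidal.
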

\begin{proof}
Let $\Algbrd(\Mcal)$ (resp. $\Algbrd(\Mod_A(\Mcal))$) be the category of $\Mcal$-algebroids (resp. $\Mod_A(\Mcal)$-algebroids); that is, these are the categories of non-necessarily univalent enriched categories. Let $B^{\pre}A$ be the $\Mcal$-algebroid with a single object and endomorphisms $A$, and note that we have a lax symmetric monoidal functor
\[
\Gamma_A^\pre: \Algbrd(\Mod_A(\Mcal)) \rightarrow \Mod_{B^{\pre}(A)}(\Algbrd(\Mcal))
\]
defined similarly to $\Gamma_A$. The proposition will follow if we show that $\Gamma_A^\pre$ is a symmetric monoidal equivalence. Let $G_!^\pre: \Algbrd(\Mod_A(\Mcal)) \rightarrow \Algbrd(\Mcal)$ be the lax symmetric monoidal functor induced by $G$. Note that $G_!^\pre$ is right adjoint to the symmetric monoidal functor $F_!^\pre$ induced from the free functor $F = - \otimes A$. A monadicity argument reduces us to proving the following:
\begin{enumerate}[\normalfont (i)]
\item $G^\pre_!$ commutes strictly with the action of $\Algbrd(\Mcal)$.
\item Let $S_\bullet$ be a simplicial diagram in $\Algbrd(\Mod_A(\Mcal))$ and assume that $G^\pre_!S_\bullet$ is the simplicial Bar resolution of some $B^\pre A$-module in $\Algbrd(\Mcal)$. Then $G^\pre_!$ preserves the geometric realization of $S_\bullet$. 
\item $G^\pre_!$ is conservative.
\end{enumerate}
Item (iii) follows readily from the fact that $G$ is conservative, while item (i) follows from the fact that $G$ commutes strictly with the action of $\Mcal$. It remains to establish (ii). Since the anima of objects of $B^\pre A$ is a singleton the simplicial anima underlying $G^\pre_! S_\bullet$ is constant. It follows that the simplicial anima underlying $S_\bullet$ is constant. Consequently, we may reduce to showing that for each anima $J$ the functor $\Algbrd_J(\Mod_A(\Mcal)) \rightarrow \Algbrd_J(\Mcal)$ induced by $G$ on algebroids with anima of objects $J$ preserves geometric realizations. This follows from the fact that $G$ itself preserves geometric realizations.
\end{proof}

\begin{notation}\label{notation gamman}
Let $n \geq 1$. Let $\Mcal$ be a presentable symmetric monoidal category, and let $A$ be a commutative algebra in $\Mcal$.  We define a lax symmetric monoidal functor
\[
\Gamma_A^n: n\kr\Cat^{\Mod_A(\Mcal)} \rightarrow \Mod_{B^nA}(n\kr\Cat^\Mcal)
\]
by induction on $n$ as follows:
\begin{itemize}
\item If $n = 1$ we let $\Gamma_A^n$ be the lax symmetric monoidal functor $\Gamma_A$ from notation \ref{notation gamma}.
\item Assume that $n > 1$. Then we let $\Gamma_A^n$ be given by the composition 
\begin{align*}
n\kr\Cat^{\Mod_A(\Mcal)} = \Cat^{(n-1)\kr\Cat^{\Mod_A(\Mcal)}} &\xrightarrow{(\Gamma_A^{n-1})_!} \Cat^{\Mod_{B^{n-1}A}((n-1)\kr\Cat^\Mcal)}  \\ &\xrightarrow{\Gamma_{B^{n-1}A}}  \Mod_{B(B^{n-1}A)}(\Cat^{ (n-1)\kr\Cat^\Mcal}) \\ &=   \Mod_{B^nA}(n\kr\Cat^\Mcal) 
\end{align*}
\end{itemize}
\end{notation}

\begin{corollary}\label{coro gamman equiv}
Let $n \geq 1$. Let $\Mcal$ be a presentable symmetric monoidal category, and let $A$ be a commutative algebra in $\Mcal$.  Then the lax symmetric monoidal functor $\Gamma_A^n$ from notation \ref{notation gamman} is a symmetric monoidal equivalence.
\end{corollary}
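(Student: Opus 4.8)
The plan is to argue by induction on $n$, with the base case $n = 1$ being precisely Proposition \ref{prop gamma equivalence}. For the inductive step I would exploit the fact that, by construction (notation \ref{notation gamman}), the functor $\Gamma_A^n$ is defined as the composite
\[
\Cat^{(n-1)\kr\Cat^{\Mod_A(\Mcal)}} \xrightarrow{(\Gamma_A^{n-1})_!} \Cat^{\Mod_{B^{n-1}A}((n-1)\kr\Cat^\Mcal)} \xrightarrow{\Gamma_{B^{n-1}A}} \Mod_{B^nA}(n\kr\Cat^\Mcal),
\]
so it suffices to show that each of the two displayed functors is a symmetric monoidal equivalence; the claim then follows since a composite of symmetric monoidal equivalences is one. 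Here I am using the definitional identifications $\Cat^{(n-1)\kr\Cat^\Mcal} = n\kr\Cat^\Mcal$ (from the inductive definition of $n\kr\Cat^\Mcal$ in the conventions) and $B(B^{n-1}A) = B^nA$ to rewrite the target.

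For the first factor, the inductive hypothesis gives that $\Gamma_A^{n-1}$ is a symmetric monoidal equivalence. I would then invoke the functoriality of the assignment $\Mcal \mapsto \Cat^\Mcal$ recorded in the conventions: a (lax) symmetric monoidal functor $F$ induces a (lax) symmetric monoidal functor $F_!$. Applying this to $\Gamma_A^{n-1}$ and to its inverse, and using that $\Cat^{(-)}$, being functorial, carries mutually inverse (symmetric monoidal) functors to mutually inverse ones, yields that $(\Gamma_A^{n-1})_!$ is itself a symmetric monoidal equivalence. For the second factor, the key observation is that $\Gamma_{B^{n-1}A}$ is literally an instance of the construction $\Gamma_{A'}$ from notation \ref{notation gamma}, taken with respect to the presentable symmetric monoidal category $\Mcal' = (n-1)\kr\Cat^\Mcal$ and the commutative algebra $A' = B^{n-1}A$ in it. Indeed, $B^{n-1}A$ is a symmetric monoidal $\Mcal$-enriched $(n-1)$-category, i.e.\ a commutative algebra object of $(n-1)\kr\Cat^\Mcal$, so that Proposition \ref{prop gamma equivalence} applies directly to conclude $\Gamma_{B^{n-1}A}$ is a symmetric monoidal equivalence.

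I expect the genuine content to reside entirely in Proposition \ref{prop gamma equivalence}, which the corollary bootstraps; the inductive step itself is formal. The two points that need care, and which I regard as the main (mild) obstacle, are first verifying that $(n-1)\kr\Cat^\Mcal$ is again presentable symmetric monoidal, so that the proposition is applicable at the next stage of the induction (this propagates the hypothesis on $\Mcal$ through the recursion), and second confirming that $\Cat^{(-)}$ genuinely sends symmetric monoidal equivalences to symmetric monoidal equivalences rather than merely to lax symmetric monoidal functors. Both reduce to the general functoriality of enriched categories under change of enrichment base, and once they are in place the induction closes immediately.
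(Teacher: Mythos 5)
Your proof is correct and is exactly the argument the paper intends by its one-line proof (``an inductive application of proposition \ref{prop gamma equivalence}''): the base case is the proposition, and the inductive step factors $\Gamma_A^n$ through $(\Gamma_A^{n-1})_!$ and $\Gamma_{B^{n-1}A}$, the latter being the proposition applied to $\Mcal' = (n-1)\kr\Cat^{\Mcal}$ and $A' = B^{n-1}A$. Your two flagged verification points (presentability of $(n-1)\kr\Cat^{\Mcal}$ and that $(-)_!$ preserves symmetric monoidal equivalences) are indeed the only details the paper leaves implicit.
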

\begin{proof}
Follows from an inductive application  of proposition \ref{prop gamma equivalence}.
\end{proof}

\begin{definition}
Let $\Acal$ be a symmetric monoidal category. We say that an $\Acal$-module category $\Dcal$ is closed if it admits all Hom objects. We denote by $\Mod_{\Acal}(\Cat)_{\closed}$ the full subcategory of $\Mod_\Acal(\Cat)$ on the closed $\Acal$-modules.  
\end{definition}

\begin{remark}\label{remark tensor vs tensor}
Let $\Acal$ be a symmetric monoidal category, and assume that every object of $\acal$ is dualizable. Then $\Mod_{\Acal}(\Cat)_{\closed}$ is closed under tensor products inside $\Mod_\acal(\Cat)$. Furthermore, the procedure of $\acal$-enrichment of closed $\Acal$-modules assembles into a fully faithful symmetric monoidal functor $\Mod_{\Acal}(\Cat)_{\closed} \rightarrow \Cat^\Acal$. 
\end{remark}

\begin{definition}\label{definition closed}
Let $n \geq 1$ and let $\Acal$ be a symmetric monoidal category, which we regard as a commutative algebra in $\Mcal = \Cat$. Assume that every object of $\acal$ is dualizable. We say that a $B^{n-1}\Acal$-module $\Dcal$ in $n\kr\Cat$ is closed if the inverse image of $\Dcal$ under $\Gamma^{n-1}_{\acal}$ belongs to $(n-1)\kr\Cat^{\Mod_{\Acal}(\Cat)_{\closed}} \subseteq (n-1)\kr\Cat^{\Mod_{\acal}(\Cat)}$.
\end{definition}

Definition \ref{definition closed} provides a way of equipping an $n$-category $\Dcal$ with an enrichment over a symmetric monoidal category $\Acal$ such that all objects of $\Acal$ dualizable: it is enough to equip $\Dcal$ with the structure of a closed module over $B^{n-1} A$. We now apply this to our case of interest:

\begin{proposition}\label{prop is closed}
Let $\ccal$ be a category with finite limits and let $n \geq 2$. Consider the symmetric monoidal functor
\[
B^{n-1}\Corr(\ccal) = B^{n-1}\Omega^{n-1} n\kr\Corr(\ccal) \rightarrow n\kr\Corr(\ccal)
\]
obtained from the counit of the $B^{n-1} \dashv \Omega^{n-1}$ adjunction. Then the induced $B^{n-1}\Corr(\ccal)$-module structure on $n\kr\Corr(\ccal)$ is closed. 
\end{proposition}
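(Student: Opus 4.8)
I will prove a slightly stronger statement by induction on $n$, better suited to carrying the induction: for every finite-limit category $\mathcal{D}$ and every finite-limit-preserving functor $g \colon \mathcal{E} \to \mathcal{D}$ for which the induced symmetric monoidal functor $\Corr(g) \colon \Corr(\mathcal{E}) \to \Corr(\mathcal{D})$ admits a right adjoint, the restriction along $B^{n-1}\Corr(g)$ of the canonical $B^{n-1}\Corr(\mathcal{D})$-module structure on $n\kr\Corr(\mathcal{D})$ is a \emph{closed} $B^{n-1}\Corr(\mathcal{E})$-module. Taking $\mathcal{E} = \mathcal{D} = \ccal$ and $g = \operatorname{id}_\ccal$ recovers the proposition, since in that case the module structure is the one arising from the counit of the $B^{n-1} \dashv \Omega^{n-1}$ adjunction. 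The point of dragging along the auxiliary functor $g$ is that in the inductive step the relevant functors get precomposed with pullback functors, and this class of hypotheses is precisely stable under that operation.

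The base case $n = 1$ is the assertion that the restriction along $\Corr(g)$ of $\Corr(\mathcal{D})$, regarded as a module over itself, is a closed $\Corr(\mathcal{E})$-module. This follows from the general fact that restricting a closed module along a symmetric monoidal functor admitting a right adjoint $\psi$ is again closed, with internal Hom objects computed by postcomposing the original internal Hom objects with $\psi$; here $\Corr(\mathcal{D})$ is closed because every object is self-dual, and $\Corr(g)$ has a right adjoint by hypothesis. For the inductive step I would unwind Definition \ref{definition closed} using the factorization $\Gamma^{n-1}_{\Corr(\mathcal{E})} = \Gamma_{B^{n-2}\Corr(\mathcal{E})} \circ (\Gamma^{n-2}_{\Corr(\mathcal{E})})_!$ from Notation \ref{notation gamman}. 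Applying $\Gamma^{-1}_{B^{n-2}\Corr(\mathcal{E})}$ exhibits $n\kr\Corr(\mathcal{D})$ as a category enriched in $\Mod_{B^{n-2}\Corr(\mathcal{E})}((n-1)\kr\Cat)$ whose Hom objects are the Hom $(n-1)$-categories $(n-1)\kr\Corr({}_{A\backslash}\mathcal{D}_{/B})$, each carrying an induced $B^{n-2}\Corr(\mathcal{E})$-module structure. By Corollary \ref{coro gamman equiv} together with Definition \ref{definition closed} applied one level down, the module $n\kr\Corr(\mathcal{D})$ is closed if and only if each of these Hom $(n-1)$-categories is a closed $B^{n-2}\Corr(\mathcal{E})$-module.

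The decisive input is then the identification of this induced module structure: I claim it is the restriction along $\Corr(q^* g)$ of the canonical $B^{n-2}\Corr({}_{A\backslash}\mathcal{D}_{/B})$-module structure, where $q^* \colon \mathcal{D} \to {}_{A\backslash}\mathcal{D}_{/B}$ is the pullback functor along $A \times B \to 1_\mathcal{D}$. Granting this, the composite $q^* g \colon \mathcal{E} \to {}_{A\backslash}\mathcal{D}_{/B}$ preserves finite limits, and $\Corr(q^* g) = \Corr(q^*) \circ \Corr(g)$ admits a right adjoint: the right adjoint of $\Corr(q^*)$ is induced by the forgetful functor ${}_{A\backslash}\mathcal{D}_{/B} \to \mathcal{D}$, which is well defined on correspondence categories because fiber products in the slice agree with those in $\mathcal{D}$ (note that on $\Corr$ this adjunction runs opposite to the one on underlying categories, a consequence of self-duality). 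Hence the hypotheses of the inductive statement hold for $q^* g$ at level $n-1$, and the inductive hypothesis delivers the required closedness.

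The main obstacle is the module-structure identification highlighted above: that after one application of the $\Gamma$-equivalence, the action of $\End^{n-1}_{n\kr\Corr(\mathcal{D})}(1_\mathcal{D}) = \Corr(\mathcal{D})$ on the Hom $(n-1)$-category ${}_{A\backslash}\mathcal{D}_{/B}$ is implemented by $\Corr(q^*)$ followed by the intrinsic self-action. At the level of objects this is the computation that tensoring a span $S \to A \times B$ by $W \in \mathcal{D}$ yields $S \times W \to A \times B$, which is exactly $\Corr(q^*)(W)$ acting by the self-action; the work of the step is to promote this to a coherent identification of module structures, which requires tracking the symmetric monoidal structure of $n\kr\Corr(\mathcal{D})$ and the naturality of the equivalences $\Gamma^{m}$ of Corollary \ref{coro gamman equiv}. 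Everything else in the argument is formal.
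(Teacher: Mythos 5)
Your proof is correct and follows essentially the same route as the paper: the paper reduces all the closure conditions to the single statement that $\Corr(\Ccal_{/X})$ is a closed $\Corr(\Ccal)$-module, proved exactly as in your base case and module-identification step (restriction of scalars along a symmetric monoidal functor admitting a right adjoint, plus self-duality of objects in correspondence categories). Your explicit induction carrying the auxiliary functor $g$ is a careful spelling-out of the unwinding that the paper compresses into the remark that ``the closure conditions needed all follow from'' its proposition on $\Corr(\Ccal_{/X})$.
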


The closure conditions needed to establish proposition \ref{prop is closed} all follow from the following:

 \begin{proposition}\label{proposition internal hom}
 Let $\ccal$ be a category with finite limits. Let $X$ be an object of $\ccal$ and consider the canonical action of $\Corr(\ccal)$ on $\Corr(\Ccal_{/X})$. Then $\Corr(\Ccal_{/X})$ is a closed module over $\Corr(\Ccal)$.
 \end{proposition}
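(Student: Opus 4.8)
The plan is to exhibit the internal $\Hom$ objects explicitly and identify them by means of an adjunction. The starting observation is that the $\Corr(\Ccal)$-module structure on $\Corr(\Ccal_{/X})$ is the restriction of the regular self-action of $\Corr(\Ccal_{/X})$ along the symmetric monoidal functor $X^\ast \colon \Corr(\Ccal) \to \Corr(\Ccal_{/X})$ obtained by applying $\Corr(-)$ to the finite-limit-preserving symmetric monoidal functor $\Ccal \to \Ccal_{/X}$, $Z \mapsto (Z \times X \to X)$; concretely $Z \cdot (Y \to X) = (Z \times Y \to X)$. Since $\Ccal_{/X}$ has finite limits, $\Corr(\Ccal_{/X})$ is itself closed symmetric monoidal (every object is self-dual, exactly as recalled at the start of this section for $\Corr(\ccal)$), with internal $\Hom$ computed by the cartesian product in $\Ccal_{/X}$, namely $\Hom_{\Corr(\Ccal_{/X})}((Y \to X),(Y' \to X)) = (Y \times_X Y' \to X)$. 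Thus the internal $\Hom$ objects all exist before restriction; the task is to transport them back along $X^\ast$.

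First I would isolate the following formal fact: if $F \colon \Acal \to \Bcal$ is a symmetric monoidal functor admitting a right adjoint $G$ in $\Cat$, and $\Bcal$ is closed symmetric monoidal, then the $\Acal$-module obtained from $\Bcal$ by restriction along $F$ is closed, with internal $\Hom$ objects $\Hom_\Acal(b,b') = G(\Hom_\Bcal(b,b'))$. This is a one-line manipulation on mapping anima: for $a \in \Acal$,
\[
\operatorname{Map}_\Bcal(F(a) \otimes b, b') \simeq \operatorname{Map}_\Bcal(F(a), \Hom_\Bcal(b,b')) \simeq \operatorname{Map}_\Acal(a, G(\Hom_\Bcal(b,b'))),
\]
naturally in $a$, which is precisely the defining representability property of the internal $\Hom$ (note $a \cdot b = F(a) \otimes b$ under restriction, so only the plain adjunction $F \dashv G$ is used).

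The content therefore reduces to producing a right adjoint to $X^\ast$ at the level of correspondence categories. The candidate is $\Corr(u) \colon \Corr(\Ccal_{/X}) \to \Corr(\Ccal)$, where $u \colon \Ccal_{/X} \to \Ccal$ is the forgetful functor; since $u$ preserves pullbacks, $\Corr(u)$ is a well-defined functor of correspondence categories (it is not symmetric monoidal, which is harmless here). The adjunction $X^\ast \dashv \Corr(u)$ is checked on mapping anima using the natural equivalence $(Z \times X) \times_X Y \simeq Z \times Y$ in $\Ccal$, giving
\[
\operatorname{Map}_{\Corr(\Ccal_{/X})}(X^\ast Z, (Y \to X)) \simeq (\Ccal_{/Z \times Y})^{\simeq} \simeq \operatorname{Map}_{\Corr(\Ccal)}(Z, Y).
\]
Feeding this into the formal fact yields $\Hom_{\Corr(\Ccal)}((Y \to X),(Y' \to X)) = \Corr(u)(Y \times_X Y' \to X) = Y \times_X Y'$, in agreement with the slogan $\Hom(\Sh(Y/X), \Sh(Y'/X)) = \Sh(Y \times_X Y')$.

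The \emph{main obstacle} is the coherent, $\infty$-categorical construction of the adjunction $X^\ast \dashv \Corr(u)$: one must produce compatible unit and counit transformations, rather than merely an objectwise equivalence of mapping anima, and one must know that $\Corr(-)$ is functorial in pullback-preserving functors and carries the relevant (non-monoidal) adjunction to an adjunction of correspondence categories. Once this is secured, the formal restriction-of-scalars fact together with the self-duality of $\Corr(\Ccal_{/X})$ make the rest automatic.
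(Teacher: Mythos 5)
Your proposal is correct and follows essentially the same route as the paper: the paper likewise observes that the module structure is restriction of scalars along the symmetric monoidal functor $\Corr(\ccal) \rightarrow \Corr(\Ccal_{/X})$ induced by product with $X$, uses the existence of a right adjoint to this functor to reduce to closure of $\Corr(\Ccal_{/X})$ as a module over itself, and deduces that from self-duality of objects in correspondence categories. Your version merely makes explicit the identification of the right adjoint with the functor induced by the forgetful functor and the resulting formula $\Hom(Y, Y') = Y \times_X Y'$, which the paper leaves implicit.
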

 \begin{proof}
 The module structure arises by restriction of scalars from the symmetric monoidal functor $\alpha: \Corr(\ccal) \rightarrow \Corr(\Ccal_{/X})$ induced by the functor $\ccal \rightarrow \Ccal_{/X}$ of product with $X$. Since $\alpha$ admits a right adjoint, we may reduce to showing that $\Corr(\ccal_{/X})$ is closed as a module over itself. Indeed, this follows from the fact that every object in a category of correspondences is dualizable.
 \end{proof}

\begin{notation}
Let $\ccal$ be a category with finite limits and let $n \geq 2$. It follows from proposition \ref{prop is closed} that the inverse image of $n\kr\Corr(\ccal)$ under $\Gamma^{n-1}_{\Corr(\ccal)}$ belongs to $(n-1)\kr\Cat^{\Mod_{\Corr(\ccal)}(\Cat)_{\closed}}$. We let $n\kr\Corr^\enr(\Ccal)$ be its image under the functor
\[
(n-1)\kr\Cat^{\Mod_{\Corr(\ccal)}(\Cat)_{\closed}} \rightarrow (n-1)\kr\Cat^{\Cat^{\Corr(\ccal)}} = n\kr\Cat^{\Corr(\ccal)}
\]
induced from the symmetric monoidal inclusion $\Mod_{\Corr(\ccal)}(\Cat)_{\closed} \rightarrow \Cat^{\Corr(\ccal)}$ (see remark \ref{remark tensor vs tensor}). We extend this notation also to the case $n = 1$ by letting $\Corr^\enr(\ccal)$ be the $\Corr(\ccal)$-enriched category associated to the closed symmetric monoidal structure on $\Corr(\ccal)$.
\end{notation}

 
\section{Construction of \texorpdfstring{$n\kr\Sh(X)$}{nSh(X)}}

We now turn to the proof of theorem \ref{theo exists categorification}. We will construct the maps 
\[
n\kr\Sh^\sharp: n\kr\Corr(\ccal) \rightarrow n\kr\Pr_{\Sh}
\] as functors of pointed $n$-categories. The compatibilities between different values of $n$ will be manifest from the construction. Note that the symmetric monoidal structures on $n\kr\Pr_{\Sh}$ and $n\kr\Sh$ follow from these compatibilities.

\begin{notation}\label{notation mu}
Let $\Mcal$ be a symmetric monoidal category. We denote by $\Gamma_\Mcal: \Mcal \rightarrow \operatorname{An}$ the functor corepresented by the unit of $\Mcal$. We equip $\Gamma_\Mcal$ with the induced lax symmetric monoidal structure.
\end{notation}

\begin{remark}
Let $\Mcal$ be a symmetric monoidal category. Then $\Gamma_\Mcal$ is the initial lax symmetric monoidal functor from $\Mcal$ into anima.
\end{remark}

\begin{notation}
Let $n \geq 1$. We let 
\[
n\kr\Mod : n\kr\Cat^{\Pr} \rightarrow (n+1)\kr\Pr
\]
be the canonical functor. In other words, $n\kr\Mod$ is obtained by freely adding colimits of cells of dimension $0 \leq d \leq n$. We note that for each object $\Dcal$ in $n\kr\Cat^{\Pr}$ we have a morphism $Y_\Dcal: \Dcal \rightarrow n\kr\Mod_{\Dcal}$.
\end{notation}

\begin{construction}\label{construction categorification}
Let $\ccal$ be a category with finite limits, and let $\Sh : \Corr(\Ccal) \rightarrow \Pr$ be a lax symmetric monoidal functor. Let $\eta: \Gamma_{\Corr(\Ccal)} \rightarrow \Sh$ be the canonical lax symmetric monoidal natural transformation. For each $n > 1$, we have that $\eta$ induces a functor 
\[
n\kr\Sh^{\sharp, \pre} : n\kr\Corr(\ccal) = (\Gamma_{\Corr(\Ccal)})_! n\kr\Corr^\enr(\ccal) \rightarrow \Sh_! n\kr\Corr^\enr(\Ccal).
\]

We let $n\kr\Pr_{\Sh} = n\kr\Mod_{\Sh_!n\kr\Corr^\enr(\Ccal)}$, and let $n\kr\Sh^\sharp$ be the composite functor
\[
n\kr\Corr(\Ccal) \xrightarrow{n\kr\Sh^{\sharp, \pre}} \Sh_! n\kr\Corr^{\enr}(\Ccal) \xrightarrow{Y_{\Sh_! n\kr\Corr^{\enr}(\Ccal)}} n\kr\Pr_{\Sh}.
\]
\end{construction}

\begin{remark}
Construction \ref{construction categorification} naturally breaks up into several steps. First one has $n\kr\Corr^\enh(\Ccal)$: this is an enrichment of $n\kr\Corr(\Ccal)$ over $\Corr(\Ccal)$, described informally by the requirement that the Hom object between a pair of $(n-1)$-cells corresponding to objects $Y$ and $Z$ in some overcategory $\Ccal_{/S}$ is given by $Y \times_S Z$. 

Then $\Sh_! n\kr\Corr^\enh(\Ccal)$ is defined, which forms the target of the functor $n\kr\Sh^{\sharp,\pre}$. This satisfies all the design criteria for our theorem \ref{theo exists categorification} except for the existence of colimits: every cell in $\Sh_! n\kr\Corr^\enh(\Ccal)$ is geometric. The desired categorifications are finally obtained by adding colimits.
\end{remark}


\section{Descent and affineness}\label{section compute}

We close this note with a discussion of the fundamental tools for working with the categorifications of a sheaf theory. We begin with a discussion of descent.

\begin{definition}
Let $\Ccal$ be a category with finite limits, and let $\Sh: \Corr(\Ccal) \rightarrow \Pr$ be a lax symmetric monoidal functor. Let $f: X \rightarrow Y$ be a map in $\Ccal$ with \v{C}ech nerve $X_\bullet$. We say that $f$ satisfies $\Sh$-codescent if $\Sh(Y)$ is the geometric realization (in $\Pr$) of $\Sh(X_\bullet)$. We say that $f$ satisfies universal $\Sh$-codescent if every base change of $f$ satisfies $\Sh$-codescent.
\end{definition}

\begin{proposition}\label{proposition descent}
Let $\Ccal$ be a category with finite limits, and let $\Sh: \Corr(\Ccal) \rightarrow \Pr$ be a lax symmetric monoidal functor. Let $f: X \rightarrow Y$ be a map in $\Ccal$ with \v{C}ech nerve $X_\bullet$. The following are equivalent:
\begin{enumerate}[\normalfont (1)]
\item $f$ satisfies universal $\Sh$-codescent.
\item $2\kr\Sh(Y)$ is the totalization of $2\kr\Sh(X_\bullet)$.
\item $n \kr \Sh(Y)$ is the totalization of $n\kr\Sh(X_\bullet)$ for all $n \geq 1$. 
\item $n \kr \Sh^\sharp(Y)$ is the totalization of $n\kr\Sh^\sharp(X_\bullet)$ for all $n \geq 1$. 
\end{enumerate}
\end{proposition}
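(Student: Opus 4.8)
The plan is to prove the cycle of implications $(1) \Rightarrow (4) \Rightarrow (3) \Rightarrow (2) \Rightarrow (1)$, of which two are formal. For $(4) \Rightarrow (3)$ I would apply the global sections functor: since $n\kr\Sh = \Gamma \circ n\kr\Sh^\sharp$ and $\Gamma$ is a right adjoint by theorem \ref{theo exists categorification}, it preserves totalizations, so the totalization identity for $n\kr\Sh^\sharp$ descends to one for $n\kr\Sh$. The implication $(3) \Rightarrow (2)$ is the special case $n = 2$.

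The substantial implication is $(1) \Rightarrow (4)$, which I would prove by induction on $n$ using the inductive description of $n\kr\Sh^\sharp(Y)$ recorded after theorem \ref{theo exists categorification}: it is generated under weighted colimits by the objects $n\kr\Sh^\sharp(W/Y)$ attached to maps $W \rightarrow Y$, with Hom-objects
\[
\Hom(n\kr\Sh^\sharp(W/Y), n\kr\Sh^\sharp(W'/Y)) = (n-1)\kr\Sh^\sharp(W \times_Y W').
\]
To identify $n\kr\Sh^\sharp(Y)$ with the totalization of $n\kr\Sh^\sharp(X_\bullet)$ it suffices to check that the comparison functor is fully faithful and that its essential image generates under colimits. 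On Hom-objects the comparison becomes, after base change along $f$, the cosimplicial object $m \mapsto (n-1)\kr\Sh^\sharp$ of the \v{C}ech nerve of the base change of $f$ along $W \times_Y W' \rightarrow Y$. Since universal $\Sh$-codescent is stable under base change, this base-changed map again satisfies hypothesis $(1)$, so the inductive hypothesis identifies the relevant totalization with $(n-1)\kr\Sh^\sharp(W \times_Y W')$; the generation statement is handled the same way by testing against the generators. The base case $n = 1$ reduces, upon mapping the representable $1\kr\Sh^\sharp(W)$ into the diagram, to the assertion that the cosimplicial object $m \mapsto \Sh(W \times X_m)$ — which is $\Sh$ applied to the \v{C}ech nerve of the base change of $f$ along $W \times Y \rightarrow Y$ — has the expected totalization, and this is exactly universal codescent evaluated at these base points.

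For the converse $(2) \Rightarrow (1)$ I would read off universal codescent from the Hom-categories of the level-two totalization. Mapping the pair of generators $\Sh(W/Y)$ and $\Sh(W'/Y)$ into the totalization identity of $(2)$ and using that $\Hom(\Sh(W/Y),-)$ preserves limits forces $\Sh(W \times_Y W')$ to be the totalization of $m \mapsto \Sh(W \times_Y X_m \times_Y W')$, which is $\Sh$ of the \v{C}ech nerve of the base change of $f$ along $W \times_Y W' \rightarrow Y$. Letting $W, W'$ range over all objects of $\Ccal_{/Y}$ (in particular over all base points $S \rightarrow Y$) yields the descent data for every base change of $f$, i.e. condition $(1)$.

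The hard part will be the bookkeeping of variance in the two steps that touch condition $(1)$. Condition $(1)$ is phrased via lower-shriek pushforward and geometric realization (a colimit), whereas $(2)$--$(4)$ are phrased via totalization (a limit); the Hom-object computations above naturally produce the descent, rather than codescent, of base changes of $f$, so matching the two requires passing through the $\Pr$-adjunction between the shriek pushforwards and their right adjoints and invoking base change (Beck--Chevalley) to recognize the resulting totalizations as the spectral data governing universal codescent. Making precise the claim that an equivalence of presentable $n$-categories may be detected on a generating family of objects together with Hom-objects, and that this detection commutes with the formation of totalizations, is the other care-point; once these are in place the induction runs cleanly on the base-change stability of universal codescent.
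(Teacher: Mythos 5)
Your outer loop is fine where it is formal: $(4)\Rightarrow(3)$ because $\Gamma$ is a right adjoint and hence preserves totalizations, and $(3)\Rightarrow(2)$ by specialization. The problem sits in the two steps that touch condition (1), and it is precisely the ``variance'' issue you flag at the end --- except that the repair you propose does not work. Condition (1) is a geometric realization along the pushforwards $f_!$, which (by passing to right adjoints in $\Pr$) is the same as a totalization along the right adjoints $(f_!)^R$. The cosimplicial diagrams you extract by applying $\Hom$ from generators to the totalizations in (2)--(4) have transition maps the \emph{pullbacks} $f^*$, and for a general presentable six-functor formalism $f^*$ is not $(f_!)^R$ (take $\Sh = \QCoh$ with $f_! = f_*$, or $\IndCoh$ along a non-proper map). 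Beck--Chevalley relates $f^* g_!$ to $g'_! f'^*$; it does not identify $f^*$ with $(f_!)^R$, so ``passing through the $\Pr$-adjunction and invoking base change'' cannot close the loop. In particular your base case $n=1$ --- that the totalization of $m \mapsto \Sh(W \times X_m)$ ``is exactly universal codescent evaluated at these base points'' --- is false as stated: that totalization along pullbacks is plain descent, a genuinely different condition at the uncategorified level. The same mismatch infects your $(2)\Rightarrow(1)$ step, which produces a totalization where a geometric realization is needed.

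What actually bridges the two variances in the paper is categorification itself: in $n\kr\Corr(\Ccal)$ for $n \geq 2$ correspondences acquire ambidextrous adjoints, so at the level of $2\kr\Sh$ the pullback and pushforward functors are biadjoint and the totalization along pullbacks coincides with the geometric realization along pushforwards. Concretely, the paper anchors the whole argument at $n=2$: the comparison functor $2\kr\Sh(Y) \rightarrow \Tot 2\kr\Sh(X_\bullet)$ admits a fully faithful \emph{left} adjoint, so (2) is equivalent to its counit being an isomorphism; this is checked on the generators $\Sh(Y'/Y)$, and applying $\Hom$ from a generator $\Sh(Z/Y)$ (which preserves the relevant colimits) lands directly on the codescent colimit $|\Sh(X'_\bullet/Y')| \rightarrow \Sh(Y'/Y)$ in $\Pr$, with no variance crossing left to do. The induction then runs \emph{upward} from $n=2$ using ambidexterity at each stage, and the $n=1$ case of (3) is recovered at the end by passing to endomorphisms of the unit in the $n=2$ statement --- it is never proved ``from below'' as your induction attempts. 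So the architecture you want is: establish $(1)\Leftrightarrow(2)$ by the left-adjoint-plus-generators argument, induct upward from there, and derive the $n=1$ statements as consequences of the $n=2$ one rather than as the base case.
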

\begin{proof}
The canonical functor $2\kr\Sh(Y) \rightarrow \Tot 2\kr\Sh(X_\bullet)$ admits a fully faithful left adjoint. Consequently, (2) is equivalent to the assertion that the counit of the adjunction is an isomorphism. This can be checked on the generators $\Sh(Y'/Y)$, where $Y' \rightarrow Y$ is a map. We thus see that (2) is equivalent to the assertion that the canonical map
\[
|\Sh(X'_\bullet/Y)| \rightarrow \Sh(Y'/Y)
\]
is an isomorphism, where $X'_\bullet$ denotes the \v{C}ech nerve of the base change $f': X' \rightarrow Y'$ of $f$. The above can be checked by applying Hom from an object $\Sh(Z/Y)$. One thus sees that (2) is equivalent to the assertion that the base change of $f$ to $Z \times_Y Y'$ satisfies $\Sh$-codescent. Since $Z \rightarrow Y $ and $Y' \rightarrow Y$ are arbitrary, we deduce that (2) is equivalent to (1).

Clearly (4) implies (3), which implies (2). Suppose now that (2) holds; note that it also holds for any base change of $f$, given the equivalence with (1). By ambidexterity for pullback and pushforward for $2\kr\Sh$ one sees that for every base change $X' \rightarrow Y'$ of $f$ with \v{C}ech nerve $X'_\bullet$ we have that $2\kr\Sh(Y)$ is the geometric realization of $2\kr\Sh(X'_\bullet)$. Repeating the argument for the equivalence between (1) and (2) one sees that $3\kr\Sh(Y)$ is the totalization of $3\kr\Sh(X_\bullet)$. Arguing inductively, we deduce that $n\kr\Sh(Y)$ is the totalization of $n\kr\Sh(X_\bullet)$ for all $n \geq 2$. The fact that this also holds for $n = 1$ follows by passing to endomorphisms of the unit.

We have now proven that (2) implies (3), so that (1), (2) and (3) are equivalent. It remains to show that these also imply (4). Indeed, to check that $n \kr \Sh^\sharp(Y)$ is the totalization of $n \kr \Sh^\sharp(X_\bullet)$ it suffices to show that this is the case after applying Hom from $n\kr\Sh^\sharp(Z)$ for some $Z$ in $\Ccal$. This reduces to the fact that (3) holds for arbitrary base changes of $f$ (since (3) has been shown to be equivalent to (1), which is stable under base change).
\end{proof}

\begin{remark}
Let $\Ccal$ be a category with finite limits and finite coproducts, and let $\Sh: \Corr(\Ccal) \rightarrow \Pr$ be a lax symmetric monoidal functor. Suppose that for every finite family of objects $X_i$ in $\ccal$ the canonical functor $\Ccal_{/ \amalg X_i} \rightarrow \prod \ccal_{/X_i}$ is an equivalence. Then the following are equivalent:
\begin{enumerate}[\normalfont (1)]
\item $\Sh$ preserves finite coproducts.
\item $n\kr\Sh$ preserves finite coproducts for all $n \geq 1$.
\item $n \kr\Sh^\sharp$ preserves finite coproducts for all $n \geq 1$.
\end{enumerate}
Combined with proposition \ref{proposition descent}, this allows one to obtain concrete criteria for checking if $n\kr\Sh$ satisfies descent with respect to a Grothendieck topology.
\end{remark}

\begin{example}
Suppose that $\Sh = \QCoh$, defined as a sheaf theory on the category of affine schemes. Then the morphisms which admit $\QCoh$-codescent are precisely the covers for the descendable topology of \cite{Mathew}. In particular, for all $n \geq 1$ we have that $n\kr\QCoh$ satisfies descent with respect to this topology.
\end{example}

\begin{example}\label{example indcoh descent}
Suppose that $\Sh = \IndCoh$, defined as a sheaf theory on the category of schemes almost of finite presentation over a field. Then every faithfully flat morphism almost of finite presentation satisfies $\IndCoh$-codescent. It follows that $n\kr\IndCoh$ admits fppf descent. Similarly, one has that $n\kr\IndCoh$ admits descent along proper surjective morphisms.
\end{example}

\begin{example}
Suppose that $\Sh = \QCoh$, defined as a sheaf theory on quasi-compact algebraic stacks with affine diagonal and almost of finite presentation over a field of characteristic zero. Then $n\kr\QCoh$ admits fppf descent thanks to \cite{StefanichDualizability} corollary 5.1.4 (in light of propositions 3.3.5 and 4.1.11).\footnote{We note that in fact the agreement $\QCoh_\co(X) = \QCoh(X)$ is already needed to define $\QCoh$ as a six-functor formalism in this generality.}  Consequently, one sees that $n\kr\QCoh$ is Kan extended from affine schemes. 
\end{example}

We now turn to a discussion of affineness.

\begin{proposition}\label{proposition affineness}
Let $\Ccal$ be a category with finite limits, and let $\Sh: \Corr(\Ccal) \rightarrow \Pr$ be a lax symmetric monoidal functor. Let $n \geq 1$. The following are equivalent:
\begin{enumerate}[\normalfont (1)]
\item The canonical functor $\Mod_{n\kr\Sh(1_\Ccal)} \rightarrow n\kr\Pr_{\Sh}$ is an equivalence.
\item For each pair of objects $X, Y$ of $\Ccal$ we have an equivalence
\[
n\kr\Sh(X) \otimes_{n\kr\Sh(1_\Ccal)} n\kr\Sh(Y) = n\kr\Sh(X \times Y).
\]
\end{enumerate}
\end{proposition}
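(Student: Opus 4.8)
The plan is to recognize both statements as the assertion that a single comparison functor is an equivalence, and then to reduce the nontrivial implication to a self-duality argument on the generators of $n\kr\Pr_{\Sh}$. Write $A = n\kr\Sh(1_\ccal)$. Since $n\kr\Sh^\sharp$ is symmetric monoidal we have $n\kr\Sh^\sharp(1_\ccal) = 1_{n\kr\Pr_{\Sh}}$, so that $A = \Gamma(1_{n\kr\Pr_{\Sh}}) = \End_{n\kr\Pr_{\Sh}}(1_{n\kr\Pr_{\Sh}})$, a commutative algebra. The functor appearing in (1) is then the canonical symmetric monoidal colimit-preserving functor $L\colon \Mod_A \to n\kr\Pr_{\Sh}$ sending the unit $A$ to $1_{n\kr\Pr_{\Sh}}$; its right adjoint $R$ is the lift of the global sections functor $\Gamma$ to an $A$-linear functor, and $R$ preserves colimits because $\Gamma$ does by the last property of theorem \ref{theo exists categorification}. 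Writing $P_X := n\kr\Sh^\sharp(X)$, we have $R(P_X) = n\kr\Sh(X)$ as an $A$-module, and, since $P_X \otimes P_Y = P_{X \times Y}$ and $R(P_{X\times Y}) = n\kr\Sh(X\times Y)$, the lax monoidal structure map of $R$ at $(P_X,P_Y)$ is exactly the comparison $n\kr\Sh(X)\otimes_A n\kr\Sh(Y) \to n\kr\Sh(X\times Y)$ of (2). Thus (1) says $L$ is an equivalence, and (2) says these structure maps are equivalences on generators.

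The first thing I would prove is that the unit $\eta\colon \id_{\Mod_A} \to RL$ is \emph{unconditionally} an equivalence. Indeed $\eta_A\colon A \to RL(A) = R(1_{n\kr\Pr_{\Sh}}) = A$ is the identity, and since $A$ generates $\Mod_A$ under colimits (the unit generates $n\kr\Pr$ under colimits, \cite{Pres}) while $RL$ and $\id$ preserve colimits, $\eta$ is an equivalence everywhere. Hence $L$ is fully faithful, and (1) becomes equivalent to the assertion that the counit $\epsilon\colon LR \to \id$ is an equivalence, i.e. that $R$ is fully faithful.

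It then remains to match ``$\epsilon$ is an equivalence'' with (2). The direction (1) $\Rightarrow$ (2) is formal: if $L$ is an equivalence then $R$ is its inverse, hence symmetric monoidal, so all of its structure maps are equivalences. For (2) $\Rightarrow$ (1) I would first promote (2) to the statement $(\star)$ that $R(P_Y)\otimes_A R(-) \to R(P_Y \otimes -)$ is an equivalence on all of $n\kr\Pr_{\Sh}$: both sides preserve colimits, they agree on the generators $P_X$ by (2), and $n\kr\Pr_{\Sh}$ is generated under colimits by $\{P_X\}$. Now the generators $P_Y$ are self-dual, being images under the symmetric monoidal functor $n\kr\Sh^\sharp$ of the self-dual objects of $n\kr\Corr(\ccal)$; consequently $\Hom(P_Y,-) = \Gamma(P_Y \otimes -)$, and the family $\{\Hom(P_Y,-)\}_Y$ is jointly conservative. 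Applying $(\star)$ to the morphism $\epsilon_W$ identifies $\Hom(P_Y,\epsilon_W)$ with $\id_{R(P_Y)}\otimes_A R(\epsilon_W)$, and $R(\epsilon_W)$ is an equivalence by the triangle identity $R(\epsilon_W)\circ\eta_{R(W)} = \id$ together with the unit equivalence already established. Hence $\Hom(P_Y,\epsilon_W)$ is an equivalence for every $Y$, so $\epsilon_W$ is an equivalence for every $W$, giving (1).

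The forward implication and the bookkeeping are routine; the real content is in (2) $\Rightarrow$ (1). The step I expect to be most delicate is the combined use of self-duality of the generators and the colimit-density statement $(\star)$: these are precisely what allow one to detect the counit through the jointly conservative family $\Hom(P_Y,-)$ after having assumed the K\"unneth comparison only on generators --- without self-duality one cannot move $R$ past the tensor product in this way. The other point requiring care is the unconditional unit equivalence, which rests on the unit of $n\kr\Pr$ generating it under colimits; this is where the specific structure of $n\kr\Pr$ from \cite{Pres} enters, and it is what lets the otherwise non-conservative functor $R$ become usable via the enriched mapping objects $\Hom(P_Y,-)$.
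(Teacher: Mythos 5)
Your argument is correct and follows essentially the same route as the paper: (1)$\Rightarrow$(2) is formal from monoidality, and for the converse one notes that $\Mod_{n\kr\Sh(1_\ccal)}\rightarrow n\kr\Pr_{\Sh}$ is fully faithful and then verifies the counit by mapping out of the self-dual generators $n\kr\Sh^\sharp(Y)$, where the comparison reduces to formula (2). The only (immaterial) organizational difference is that the paper checks the counit just on the generators $n\kr\Sh^\sharp(X)$ and concludes essential surjectivity from colimit-generation, whereas you first globalize the projection formula to all objects and then test the counit everywhere.
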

\begin{proof}
The fact that (1) implies (2) follows directly from the fact that $n\kr\Sh^\sharp$ is symmetric monoidal. Suppose now that (2) holds. Note that the functor $\Mod_{n\kr\Sh(1_\Ccal)} \rightarrow n\kr\Pr_{\Sh}$ is fully faithful. Consequently, to show (1) it will suffice to show that for every object $X$ in $\Ccal$ the unit map
\[
1_{n\kr\Pr_{\Sh}} \otimes_{n\kr\Sh(1_\Ccal)} n\kr\Sh(X) \rightarrow n\kr\Sh(X)^\sharp
\]
is an isomorphism. To check this it is enough to prove that this is the case after Hom from $n\kr\Sh^\sharp(Y)$ for some $Y$, in which case our assertion reduces to the formula from (2).
\end{proof}

A mild variant of the proof of proposition \ref{proposition affineness} proves the following relative version:

\begin{proposition}\label{prop affineness relativo}
Let $\Ccal$ be a category with finite limits, and let $\Sh: \Corr(\Ccal) \rightarrow \Pr$ be a lax symmetric monoidal functor. Let $X \rightarrow Y$ be a morphism in $\Ccal$, and let $n \geq 1$. The  following are equivalent:
\begin{enumerate}[\normalfont (1)]
\item The canonical functor $\Mod_{n\kr\Sh(X/Y)}((n+1)\kr\Sh(Y)) \rightarrow (n+1)\kr\Sh(X)$ is an equivalence.
\item For every pair of maps $Z \rightarrow X \leftarrow W$ we have an equivalence
\[
n\kr\Sh(Z/Y) \otimes_{n\kr\Sh(X/Y)} n\kr\Sh(W/Y) = n\kr\Sh(Z \times_X W / Y).
\]
\end{enumerate}
\end{proposition}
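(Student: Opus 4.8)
The plan is to run the proof of proposition \ref{proposition affineness} \emph{mutatis mutandis}, replacing the monoidal unit $1_\Ccal$ by the base $Y$, the ambient category $n\kr\Pr_{\Sh}$ by $(n+1)\kr\Sh(Y)$, and cartesian products by fiber products over $X$. Write $p \colon X \to Y$ for the given map and set $A = n\kr\Sh(X/Y)$, the algebra in $(n+1)\kr\Sh(Y)$ obtained as the pushforward $p_!$ of the unit of $(n+1)\kr\Sh(X)$. The canonical functor of item (1), which I denote $\Phi \colon \Mod_{A}((n+1)\kr\Sh(Y)) \to (n+1)\kr\Sh(X)$, is the extension of scalars built from the symmetric monoidal pullback $p^*$ together with the augmentation $p^* A \to 1_{(n+1)\kr\Sh(X)}$; its right adjoint $\Psi$ sends a generator $n\kr\Sh(Z/X)$ (for a map $Z \to X$) to the $A$-module $n\kr\Sh(Z/Y)$ associated to the factorization $Z \to X \xrightarrow{p} Y$, the relevant identification being an instance of the ambidexterity used in proposition \ref{proposition descent}.

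The implication (1) $\Rightarrow$ (2) is then immediate, exactly as in proposition \ref{proposition affineness}: $\Phi$ is symmetric monoidal, so under the equivalence it carries the relative tensor product $\otimes_{A}$ to the tensor product of $(n+1)\kr\Sh(X)$, which on the generators is given by the fiber-product formula $n\kr\Sh(Z/X) \otimes n\kr\Sh(W/X) = n\kr\Sh(Z \times_X W/X)$ of the inductive description; transporting this back along $\Phi$ yields the formula of item (2). I would also record at this point that $\Phi$ is fully faithful, for the same reason as in the absolute case: the composite $\Psi \Phi$ is computed by the projection formula for the relative global sections to be the identity monad on $\Mod_{A}((n+1)\kr\Sh(Y))$.

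For the converse I would assume (2) and argue as in the last paragraph of the proof of proposition \ref{proposition affineness}. Since $\Phi$ is fully faithful and the objects $n\kr\Sh(Z/X)$ generate $(n+1)\kr\Sh(X)$ under weighted colimits, it suffices to show that $\Phi$ is essentially surjective, i.e.\ that for every $Z \to X$ the counit map $\Phi(n\kr\Sh(Z/Y)) \to n\kr\Sh(Z/X)$ is an equivalence. This can be tested after applying $\Hom_{(n+1)\kr\Sh(X)}(n\kr\Sh(W/X), -)$ for an arbitrary map $W \to X$. On the target, the inductive Hom-formula gives $n\kr\Sh(W \times_X Z)$; on the source, using that the generators $n\kr\Sh(W/X)$ are self-dual and that $\Phi$ is linear over $(n+1)\kr\Sh(X)$, the same Hom is computed by the relative tensor product $n\kr\Sh(W/Y) \otimes_{A} n\kr\Sh(Z/Y)$, which by (2) equals $n\kr\Sh(W \times_X Z/Y)$ and whose relative global sections recover $n\kr\Sh(W \times_X Z)$. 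As these two computations are compatible with the counit, the map is an equivalence, proving (1).

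The step I expect to be the main obstacle is the last one: matching the two computations of $\Hom_{(n+1)\kr\Sh(X)}(n\kr\Sh(W/X), -)$ — the direct one governed by the inductive Hom-description and the module-theoretic one governed by the relative tensor product over $A$ — compatibly with the counit, so that (2) genuinely forces the counit to be an equivalence. This rests on the base-change, projection-formula and ambidexterity properties of the categorified formalism $(n+1)\kr\Sh$ (in particular the identification $p^* p_! = (\mathrm{pr}_2)_! (\mathrm{pr}_1)^*$ for the square defining $X \times_Y X$), which are precisely the inputs whose absolute counterpart is the symmetric monoidality of $n\kr\Sh^\sharp$ invoked in proposition \ref{proposition affineness}.
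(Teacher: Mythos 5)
Your proposal matches the paper exactly: the paper gives no separate argument for this proposition, stating only that it follows from ``a mild variant of the proof of proposition \ref{proposition affineness}'', and your \emph{mutatis mutandis} adaptation (replacing $1_\Ccal$ by $Y$ and $n\kr\Pr_{\Sh}$ by $(n+1)\kr\Sh(Y)$, using full faithfulness of the module functor, and testing the counit on the generators $n\kr\Sh(Z/X)$ by applying Hom from $n\kr\Sh(W/X)$ so that the claim reduces to the formula in (2)) is precisely that variant. The extra details you supply about the Hom computations are consistent with how the paper argues both in the absolute case and in its later ind-coherent affineness proposition.
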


\begin{corollary}\label{coro kunneth implica}
Let $\Ccal$ be a category with finite limits, and let $\Sh: \Corr(\Ccal) \rightarrow \Pr$ be a lax symmetric monoidal functor. Suppose that for every pair of maps $X \rightarrow S \leftarrow Y$ in $\Ccal$ the canonical functor $\Sh(X) \otimes_{\Sh(S)} \Sh(Y) \rightarrow \Sh(X \times_S Y)$ is an equivalence. Then $(n+1)\kr\Sh(X) = n\kr\Mod_{\Sh(X)}$ for all $n \geq 1$.
\end{corollary}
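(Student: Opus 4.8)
The plan is to deduce the statement from the affineness criteria of Propositions \ref{proposition affineness} and \ref{prop affineness relativo} by propagating the hypothesis up the categorical ladder. I would first observe that the hypothesis is exactly condition (2) of the relative affineness Proposition \ref{prop affineness relativo} in the bottom case $n = 1$. Indeed, unwinding the inductive description furnished by theorem \ref{theo exists categorification} for $2\kr\Sh(Y)$ — generators $1\kr\Sh(Z/Y)$ attached to maps $Z \to Y$, with $\Hom(1\kr\Sh(Z/Y), 1\kr\Sh(W/Y)) = \Sh(Z \times_Y W)$ — one checks that the relative tensor product $1\kr\Sh(Z/Y) \otimes_{1\kr\Sh(X/Y)} 1\kr\Sh(W/Y)$ is computed by the base change $\Sh(Z) \otimes_{\Sh(X)} \Sh(W)$, so that condition (2) at $n = 1$ reduces precisely to the assumed formula $\Sh(Z) \otimes_{\Sh(X)} \Sh(W) = \Sh(Z \times_X W)$.

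I would then argue by induction on $n$ that the categorical K\"unneth formula holds at every level, i.e. that condition (2) of Proposition \ref{prop affineness relativo} holds at level $n$ for every morphism $X \to Y$ in $\Ccal$. Granting this at level $n$, Proposition \ref{prop affineness relativo} yields the relative affineness equivalence $(n+1)\kr\Sh(X) = \Mod_{n\kr\Sh(X/Y)}((n+1)\kr\Sh(Y))$ for all $X \to Y$. The inductive step consists in feeding these level-$n$ module descriptions into the level-$(n+1)$ tensor products: recognizing the generators $(n+1)\kr\Sh(Z/Y), (n+1)\kr\Sh(W/Y)$ and $(n+1)\kr\Sh(X/Y)$ of $(n+2)\kr\Sh(Y)$ as the module $(n+1)$-categories produced by the level-$n$ affineness, the relative tensor product over $(n+1)\kr\Sh(X/Y)$ is rewritten, after passing to endomorphisms of the monoidal unit, in terms of the level-$n$ relative tensor product, which is controlled by the inductive hypothesis together with the base-change formula $\Hom(n\kr\Sh(Z'/X), n\kr\Sh(W'/X)) = n\kr\Sh(Z' \times_X W')$. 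This is the same adjunction-and-generation mechanism used in the proof of Proposition \ref{proposition affineness}: the comparison functor is automatically fully faithful, and the level-$n$ K\"unneth formula is exactly what forces the counit to be an isomorphism, upgrading it to an equivalence.

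Finally, to reach the stated form of the conclusion I would apply the absolute affineness Proposition \ref{proposition affineness} to the restricted six-functor formalism $\Sh_X$ obtained by restricting $\Sh$ to $\Corr(\Ccal_{/X})$, which again satisfies the K\"unneth hypothesis since fibre products and values are computed in $\Ccal$. Since $\Sh_X(1_{\Ccal_{/X}}) = \Sh(X)$ and $n\kr\Pr_{\Sh_X} = (n+1)\kr\Sh(X)$ (by compatibility of the construction with passage to slices), the level-$n$ K\"unneth formula just established gives $(n+1)\kr\Sh(X) = \Mod_{n\kr\Sh(X)}$; combined with the inductively known identification $n\kr\Sh(X) = (n-1)\kr\Mod_{\Sh(X)}$ and the recursion $n\kr\Mod_{\Sh(X)} = \Mod_{(n-1)\kr\Mod_{\Sh(X)}}$ for the functor $n\kr\Mod$, this yields $(n+1)\kr\Sh(X) = n\kr\Mod_{\Sh(X)}$, as desired. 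The main obstacle is the inductive step of the middle paragraph: making precise that looping — the identification $\End_{(n+1)\kr\Sh(-)}(1) = n\kr\Sh(-)$ — intertwines the level-$(n+1)$ relative tensor product with the level-$n$ one, so that the K\"unneth formula genuinely propagates one categorical dimension at a time. Everything else is bookkeeping with the generators-and-Homs description and the two affineness propositions.
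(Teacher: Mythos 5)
Your proposal is correct in strategy and matches the paper, whose entire proof reads ``This follows from an inductive application of propositions \ref{proposition affineness} and \ref{prop affineness relativo}'': you identify the same two propositions and the same induction, propagating the K\"unneth hypothesis one categorical level at a time via the generators-and-Homs description. The paper records none of the details you supply (in particular the looping step you flag as the main obstacle), so your elaboration goes beyond the written argument but is consistent with it.
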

\begin{proof}
This follows from an inductive application of propositions \ref{proposition affineness} and \ref{prop affineness relativo}.
\end{proof}

\begin{example}
Let $\Sh = \QCoh$, defined on the category of affine schemes. Then it follows from corollary \ref{coro kunneth implica} that the categorification of quasi-coherent sheaves provided by theorem \ref{theo exists categorification} is compatible with the one we constructed in \cite{Thesis}.
\end{example}

\begin{example}
Let $\Sh$ be the Betti sheaf theory, defined on locally compact Hausdorff topological spaces. Then it follows from corollary \ref{coro kunneth implica} that $(n+1)\kr\Sh(X) = n\kr\Mod_{\Sh(X)}$ for all $n \geq 1$.
\end{example}

We finish with the following proposition, which spells out the basic affineness properties of the ind-coherent theory.

\begin{proposition}
Let $\Sh = \IndCoh$, defined on the category of schemes almost of finite presentation over a field $k$.
\begin{enumerate}[\normalfont (1)]
\item Let $X \rightarrow Y$ be a closed immersion. Then we have an equivalence 
\[
2\kr\IndCoh(X) = \Mod_{\IndCoh(X/Y)}(2\kr\IndCoh(Y)).
\]
\item Let $X \rightarrow Y$ be an arbitrary map. Then we have an equivalence
\[
n\kr\IndCoh(X) = \Mod_{(n-1)\kr\IndCoh(X/Y)}(n\kr\IndCoh(Y)).
\]
for all $n \geq 3$.
\item We have an equivalence
\[
n\kr\Pr_{\IndCoh} = \Mod_{(n-1)\kr\Pr_{\IndCoh}}
\]
for all $n \geq 3$.
\item If $k$ is perfect, then $\Pr_{\IndCoh} = \Pr$, and hence the canonical map $2\kr\Pr \rightarrow 2\kr\Pr_{\IndCoh}$ is fully faithful.
\end{enumerate}
\end{proposition}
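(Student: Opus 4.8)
The plan is to treat the four statements by a combination of the general affineness criterion of proposition~\ref{prop affineness relativo} and the known structural features of the ind-coherent six-functor formalism. The common engine is the relative K\"unneth/base-change formula appearing as condition (2) of proposition~\ref{prop affineness relativo}: to identify $(n+1)\kr\IndCoh(X)$ with $\Mod$ over a module category, it suffices to verify, for every pair of maps $Z \rightarrow X \leftarrow W$, the tensor product formula
\[
n\kr\IndCoh(Z/Y) \otimes_{n\kr\IndCoh(X/Y)} n\kr\IndCoh(W/Y) = n\kr\IndCoh(Z \times_X W / Y).
\]
So the real content in each part is to establish the appropriate categorical K\"unneth formula at the relevant categorical level, and then quote proposition~\ref{prop affineness relativo} (or, for the absolute statements (3) and (4), proposition~\ref{proposition affineness} together with corollary~\ref{coro kunneth implica}).

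For part (1), I would restrict attention to a closed immersion $X \rightarrow Y$. Here the point is that the base-change/K\"unneth formula for $\IndCoh$ at the uncategorified level holds once one of the maps is a closed immersion, even though it fails in general for $\IndCoh$; this is the input that makes the $n=1$ instance of proposition~\ref{prop affineness relativo} applicable, yielding $2\kr\IndCoh(X) = \Mod_{\IndCoh(X/Y)}(2\kr\IndCoh(Y))$. For part (2), the stabilization phenomenon that occurs after one categorification is the key: once $n \geq 2$, the relevant K\"unneth formula for $n\kr\IndCoh$ holds for an \emph{arbitrary} map $X \rightarrow Y$, because the failure of the K\"unneth formula for $\IndCoh$ is a purely $1$-categorical pathology that is washed out after passing to sheaves of categories. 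Concretely, I would verify condition~(2) of proposition~\ref{prop affineness relativo} with the categorical level shifted up by one, using that base change holds unconditionally for $n\kr\IndCoh$ with $n \geq 2$ (the ambidexterity invoked in the proof of proposition~\ref{proposition descent} is the relevant manifestation of this), and then invoke the relative affineness proposition to conclude for all $n \geq 3$.

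For part (3), I would take $Y = 1_\Ccal = \Spec(k)$ and apply the absolute affineness criterion, proposition~\ref{proposition affineness}: since $n\kr\Pr_{\IndCoh} = (n+1)\kr\IndCoh(1_\Ccal)$ by the identification noted after theorem~\ref{theorem tqft sh}, the equivalence $n\kr\Pr_{\IndCoh} = \Mod_{(n-1)\kr\Pr_{\IndCoh}}$ for $n \geq 3$ follows from the $n \geq 2$ categorified K\"unneth formula established in (the proof of) part (2), specialized to products over the base. Part (4) is then a matter of identifying the starting datum: when $k$ is perfect, $\IndCoh(\Spec k) = \QCoh(\Spec k) = \Mod_k$, so $\Pr_{\IndCoh} = \Mod_{\IndCoh(\Spec k)} = \Pr$; the fully faithfulness of $2\kr\Pr \rightarrow 2\kr\Pr_{\IndCoh}$ follows since this functor is the $\Mod$-functor associated to an equivalence at the bottom level, hence an equivalence, and in particular fully faithful. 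I expect the main obstacle to be part (2): one must pin down precisely why the categorical K\"unneth formula for $n\kr\IndCoh$ holds for arbitrary maps once $n \geq 2$ while it fails at $n = 1$, and this requires carefully tracing the ambidexterity and base-change properties of the categorified theory rather than simply quoting a $1$-categorical K\"unneth statement — the very failure at $n=1$ (illustrated by $M = S^1$, $X = \mathbb{A}^1$ in the introduction) is what forces the hypotheses to differ between parts (1), (2) and (3).
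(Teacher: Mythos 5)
Your overall framing --- reduce everything to the K\"unneth criteria of propositions \ref{proposition affineness} and \ref{prop affineness relativo} --- is the right starting point, and your treatment of (3) matches the paper's (it is deduced formally from (2)). But the two substantive parts are not actually proved. For (1), the input you propose, namely that ``the K\"unneth formula for $\IndCoh$ at the uncategorified level holds once one of the maps is a closed immersion,'' is not available. The formula in condition (2) of proposition \ref{prop affineness relativo} is a tensor product computed \emph{internally} to $2\kr\IndCoh(Y)$, where $\Hom(\IndCoh(Z/Y), \IndCoh(W/Y)) = \IndCoh(Z \times_Y W)$, so it is not a statement about tensor products of presentable categories; and if you read it as the naive formula $\IndCoh(Z) \otimes_{\IndCoh(X)} \IndCoh(W) \simeq \IndCoh(Z \times_X W)$, the closed-immersion hypothesis on $X \to Y$ does not even enter the formula, and the statement is false (take $Z = W = \Spec(k) \to X = \Spec(k[t])$: the left side is $\QCoh$ of the derived self-intersection while the right side is $\IndCoh$ of it, and these differ). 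The paper instead proves (1) by a generation argument: the comparison functor $\Mod_{\IndCoh(X/Y)}(2\kr\IndCoh(Y)) \to 2\kr\IndCoh(X)$ is fully faithful for formal reasons, and each generator $\IndCoh(Z/X)$ lies in the subcategory generated by its image because the pushforward $\IndCoh(X \times_Y Z/X) \to \IndCoh(Z/X)$ along the surjective closed immersion $X \times_Y Z \to Z$ admits a monadic right adjoint, so that $\IndCoh(Z/X)$ is recovered as a Kleisli-type weighted colimit. This monadicity is the genuine $\IndCoh$-specific input, and it is absent from your proposal.

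For (2) you explicitly defer the main point, and the missing idea is concrete: after reducing to $n = 3$, one rewrites
\[
2\kr\IndCoh(Z/Y) \otimes_{2\kr\IndCoh(X/Y)} 2\kr\IndCoh(W/Y) \simeq 2\kr\IndCoh(Z \times_Y W/Y) \otimes_{2\kr\IndCoh(X \times_Y X/Y)} 2\kr\IndCoh(X/Y),
\]
which trades the arbitrary map $X \to Y$ for the diagonal $X \to X \times_Y X$ and its base change to $Z \times_Y W$ --- closed immersions --- so that part (1) applies. No appeal to ambidexterity or to the pathology being ``washed out'' after categorification is made, nor would such an appeal suffice on its own. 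Finally, in (4) the real input is again proposition \ref{proposition affineness} at $n = 1$, i.e.\ the K\"unneth formula $\IndCoh(X) \otimes \IndCoh(Y) \simeq \IndCoh(X \times Y)$ for schemes almost of finite presentation over a perfect field; the identity $\IndCoh(\Spec(k)) = \Mod_k$ holds over any field and cannot by itself be the point. Moreover, your claim that $2\kr\Pr \to 2\kr\Pr_{\IndCoh}$ is an \emph{equivalence} is wrong: the proposition deliberately asserts only fully faithfulness, and part (3) excludes $n = 2$ precisely because $2\kr\Pr_{\IndCoh} \neq \Mod_{\Pr_{\IndCoh}}$. Fully faithfulness is the formal half of proposition \ref{proposition affineness} and needs no further argument.
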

\begin{proof}
We begin with a proof of (1). The canonical functor
\[
\Mod_{\IndCoh(X/Y)}(2\kr\IndCoh(Y)) \rightarrow 2\kr\IndCoh(X)
\]
is fully faithful, so it will suffice to show that its image generates $2\kr\IndCoh(X)$. Let $Z \rightarrow X$ be a map, and note that $\IndCoh(X \times_Y Z / X)$ is an object in the image of the pullback functor. Since the projection $X\times_Y Z \rightarrow Z$ is a surjective closed immersion, we have that the pushforward map $\IndCoh(X \times_Y Z / X) \rightarrow \IndCoh(Z/X)$ admits a monadic right adjoint. Item (1) now follows from the fact that Eilenberg-Moore objects for monads in presentable higher categories can be computed in terms of Kleisli objects (which are weighted colimits).

We note that (3) follows formally from (2), in light of proposition \ref{proposition affineness}. Item (2) may similarly be reduced to the case $n = 3$. In this case, proposition \ref{proposition affineness} reduces us to showing that for every pair of maps $Z \rightarrow X \leftarrow W$ we have an equivalence
\[
2\kr\IndCoh(Z/Y) \otimes_{2\kr\IndCoh(X/Y)} 2\kr\IndCoh(W/Y) = 2\kr\IndCoh(Z \times_X W / Y).
\]
The left hand side may be rewritten as
\[
2\kr\IndCoh(Z/Y) \otimes 2\kr\IndCoh(W/Y) \otimes_{2\kr\IndCoh(X/Y) \otimes 2\kr\IndCoh(X/Y)} 2\kr\IndCoh(X/Y)
\]
which is equivalent to
\[
2\kr\IndCoh(Z \times_Y W / Y) \otimes_{2\kr\IndCoh(X \times_Y X / Y)} 2\kr\IndCoh(X/Y).
\]
Item (2) will now follows by applying (1) to the closed immersions given by the diagonal $X \rightarrow X \times_Y X$ and its base change to $Z \times_Y W$.

Finally, item (4) is a direct consequence of proposition \ref{proposition affineness}.
\end{proof}


\ifx\inmain\undefined
\bibliographystyle{myamsalpha2}
\bibliography{References}
\fi
 
\end{document}